%%%%%%%%%%%%%%%%%%%%%%%%%%%%%%%%%%%%%%%%%%%%%%%%%%%%%%%%%%%%%%%%

%%%%%%%%%%%%%%%%%%%%%% 15 April  2015  %%%%%%%%%%%%%%%%%%%%%%%%%%%%

%%%%%%%%%%%%%%%%%%%%%%%%%%%%%%%%%%%%%%%%%%%%%%%%%%%%%%%%%%%%%%%%

\documentclass[11pt,a4paper]{article}

\usepackage{geometry}
\geometry{a4paper,total={210mm,297mm},left=30mm,right=30mm, top=30mm,bottom=30mm}
\usepackage[T1]{fontenc}
\usepackage[utf8]{inputenc}
\usepackage{authblk}
\usepackage{amsmath}
\usepackage{amssymb}
\usepackage{amsfonts}
\usepackage{amsthm}
\usepackage{enumerate}
\usepackage{paralist}
\usepackage{amsfonts}
\usepackage{tensor} 
\usepackage{lmodern}
\usepackage{upgreek}
\usepackage{mathrsfs}
\usepackage{nicefrac}
\usepackage{verbatim}
\usepackage{amsmath}
\usepackage[all, cmtip]{xy}\usepackage{amssymb}
\usepackage{amsthm}
\usepackage{extpfeil}
\usepackage{bbm}
\usepackage{paralist}
\usepackage[colorlinks,linkcolor=blue,citecolor=gray,pagebackref=true,urlcolor=blue,pdftex]{hyperref}

\usepackage{titlesec}
\titleformat*{\section}{\Large\sffamily}

\usepackage{graphicx}

\usepackage[all]{xy}
\usepackage{tikz}
\usepackage{relsize}
\usepackage{comment}
\usepackage{mathtools}

%%%%%%%%%%%%%%%%%%%%%%%%%%%%
\theoremstyle{plain}
\newtheorem{thm}{Theorem}

\newtheorem*{thm*}{Theorem}

\newtheorem{prop}[thm]{Proposition}
\newtheorem{lem}[thm]{Lemma}
\newtheorem{cor}[thm]{Corollary}

\newtheorem*{prb*}{Problem}

\theoremstyle{definition}
\newtheorem{rmk}[thm]{Remark}
\newtheorem{df}[thm]{Definition}

\newtheorem{ex}[thm]{Example}

%%%%%%%%%%%%%%%%%%%%%%%%%%%%
%\newcommand{\plax}{\mbox{Plax}\,}
%\newcommand{\lex}{<_{lex}\,}
%\newcommand{\Mon}{\mbox{Mon}(S)\,}

%\newcommand{\In}{\mathrm{in}\,}
%\newcommand{\gin}{\mathrm{gin}\,}
%\newcommand{\lir}{{( i)}\,}
%\newcommand{\dps}{\mathrm{depth}\,}
\newcommand\note[1]{\textbf{\color{red}#1}}
\newcommand\Defn[1]{\emph{\color{blue}#1}}
\newcommand{\cm}[1]{}

\newcommand\wt[1]{\widetilde{#1}}

\renewcommand{\k}{\mathbbm{k}}

\newcommand{\lir}{{\langle i\rangle}\,}
\newcommand{\dps}{\mathrm{depth}\,}

\newcommand\BW{\mathrm{BW}}

\newcommand\U{\mathrm{U}}

\newcommand\h{\mathrm{h}}

\makeatletter
\def\dual#1{\expandafter\dual@aux#1\@nil}
\def\dual@aux#1/#2\@nil{\begin{tabular}{@{}c@{}}#1\\#2\end{tabular}}
\makeatother

%%%%%%%%%%%%%%%%%%%%%%%%%%%%%%%%%%%%%%%%%%%%%%%%%%%%%%%%%%%%%%%%%%%%%%%%%%
\begin{document}

% ------------------------------------------------------------------------
\title{Dimension filtration, sequential Cohen--Macaulayness and a new polynomial invariant of graded algebras}

\author{Afshin Goodarzi%
  \thanks{Email address: \texttt{afshingo@kth.se}}}
\affil{ Royal Institute of Technology, Department of Mathematics, S-100 44, Stockholm, Sweden.}
\maketitle

% ------------------------------- ABSTRACT -----------------------------------------

\begin{abstract}
\noindent Let $\k$ be a field and let $A$ be a standard $\mathbb{N}$-graded $\k$-algebra. Using numerical information of some invariants in the primary decomposition of $0$ in $A$, namely the so called dimension filtration, we associate a bivariate polynomial $\BW(A;t,w)$, that we call the Bj\"{o}rner--Wachs polynomial, to $A$.\\
 It is shown that the Bj\"{o}rner--Wachs polynomial is an algebraic counterpart of the combinatorially defined $h$-triangle of finite simplicial complexes introduced by Bj\"{o}rner \& Wachs. We provide a characterisation of sequentially Cohen--Macaulay algebras in terms of the effect of the reverse lexicographic generic initial ideal on the Bj\"{o}rner--Wachs polynomial. More precisely, we show that a graded algebra is sequentially Cohen--Macaulay if and only if it has a stable Bj\"{o}rner--Wachs polynomial under passing to the reverse lexicographic generic initial ideal. We conclude by discussing connections with the Hilbert series of local cohomology modules.
\end{abstract}

% ----------------------------------------------- SECTION: INTRODUCTION ------------------------------------------------------------------

\section{Introduction}

%\note{TO DO: \\ Exterior algebra(primary decomposition in Exterior algebra)/ Sbarra Herzog/ gin-zero}

 Associated to every finite simplicial complex there is a standard monomial algebra, the so called face ring of the complex. In order to verify the upper bound conjecture, Stanley~\cite{Stanley75} studied the face numbers of a triangulated sphere via the numerical properties of this algebra. Stanley's proof has two major ingredients. Namely, that the Hilbert series of a face ring can be expressed in terms of the combinatorially defined $h$-numbers of the complex, and that the face ring of a triangulated sphere is Cohen--Macaulay. One may have a look at Stanley's recent article~\cite{StanleyUBCstory} for the full account of the story of ``\textit{how the upper bond conjecture was proved}''.
 
 The Cohen--Macaulay property of the face ring is implied by the combinatorial property of the complex being shellable. Motivated by examples coming from subspace arrangements, Bj\"{o}rner \& Wachs~\cite{Bj-WI} generalised shellability by introducing the concept of nonpure shellable complexes. Stanley ~\cite{StanleyGreen} then introduced sequential Cohen--Macaulayness in order to have an algebraic counterpart for this new concept. At almost the same time Schenzel~\cite{Schenzel99}, independently, came up with the same notion but from a totally different perspective. 
 
 Bj\"{o}rner \& Wachs~\cite{Bj-WI} also defined doubly indexed $h$-numbers of a simplicial complex as a finer invariant than the usual $h$-numbers, in the sense that one can obtain the latter from the former (see Section~\ref{faceRing} below, for precise definitions and more details). The array of doubly indexed $h$-numbers of a complex is called the $h$-triangle. For a sequentially Cohen--Macaulay complex some interesting topological and algebraic invariants, such as topological Betti numbers of the complex and graded Betti numbers of the face ring of its Alexander dual, are encoded in the $h$-triangle. It should be noted that this later connection has been recently used by Adiprasito, Bj\"{o}rner \& Goodarzi~\cite{KAA} to characterise the possible Betti tables of componentwise linear ideals. 
 
 As the Hilbert series is the algebraic counterpart of the $h$-numbers, one would expect to have an algebraic counterpart also for the doubly indexed $h$-numbers. The objective of this paper is to fill this gap by providing an algebraic counterpart for the $h$-triangle. By making use of Schenzel's dimension filtration~\cite{Schenzel99}, to every standard $\mathbb{N}$-graded $\k$-algebra we associate a bivariate polynomial; the Bj\"{o}rner--Wachs polynomial. This polynomial specialises to the $h$-triangle in the case of face rings of simplicial complexes. The Bj\"{o}rner--Wachs polynomial of a sequentially Cohen--Macaulay algebra contains much interesting information of the algebra, such as extremal Betti numbers.

 The paper is organised as follows. First in Section~\ref{Pre}, we recall some preliminaries and define the Bj\"{o}rner--Wachs polynomial. The face rings are the subject of study in Section~\ref{faceRing}. We show in Theorem~\ref{BWDelta} that the combinatorially defined doubly indexed $h$-numbers of a simplicial complex are, precisely, the coefficients of the Bj\"{o}rner--Wachs polynomial of its face ring. We present some basic results about Borel-fixed ideals in Section~\ref{SSI}. The material in this section will be of use in the next sections. Section~\ref{charSCM} is devoted to a characterisation of sequentially Cohen--Macaulay algebras in terms of the Bj\"{o}rner--Wachs polynomial, namely, we prove in Theorem~\ref{sqcm} that sequentially Cohen--Macaulay algebras are exactly those that have a stable Bj\"{o}rner--Wachs polynomial under passing to the reverse lexicographic generic initial ideal. This result provides a (generalised) symmetric version of the main result in~\cite{Duval}. Beside this, in Proposition~\ref{char}, we give a few conditions, each of them being equivalent to sequential Cohen--Macaulayness. We will discuss some connections to the numerical data of the local cohomology modules in Section~\ref{Local} in case of sequentially Cohen--Macaulay algebras. Finally, in Section~\ref{Remarks} some remarks on the Alexander dual of sequentially Cohen--Macaulay simplicial complexes will be discussed.

%====================================== SECTION: Pre =================================

\section{Preliminaries}\label{Pre}

We assume some familiarity with basic notions in commutative algebra and the theory of simplicial complexes. The reader is referred to the books by Eisenbud~\cite{Eisenbud} and by Stanley~\cite{StanleyGreen}, for undefined terminologies. 

% ------------------------------------------------------- SUBSECTION 1 ----------------------------------------------------------

\subsection{Dimension Filtration and Unmixed Layers}

Let $R=\k[x_1,\ldots, x_n]$ be the polynomial ring over a field $\k$. Assume that $R$ is equipped with the standard grading, i.e., $\deg x_i=1$ for all $1\leq i\leq n$. Let $I$ be a homogeneous ideal in $R$ and $A=R/I$ be the associated $\mathbb{N}$-graded algebra. Also, let $0=\bigcap_{j=0}^s \mathfrak{q}_j$ be a reduced primary decomposition of $I$. For $1\leq j\leq s$ Denote by $\mathfrak{p}_j$ the radical $\sqrt{\mathfrak{q}_j}$ of $\mathfrak{q}_j$. Let us denote by $I^{\langle i\rangle}$ the ideal
\begin{equation*}\label{pdecom}
I^{\langle i\rangle}=\bigcap_{\mathrm{dim}(R/\mathfrak{p}_j)>i} \mathfrak{q}_j,
\end{equation*}
 for all $0\leq i\leq d=\dim A$. In particular, we have $I=I^{\langle 0\rangle}$ and $I^{\langle d\rangle}=R$ and we have a filtration 
 
\begin{equation}\label{1filtr}
 I=I^{\langle 0\rangle}\subseteq I^{\langle 1\rangle}\subseteq\ldots\subseteq I^{\langle d\rangle}=R
\end{equation}
 that we call the \Defn{dimension filtration} of the pair $(I,R)$. After modding $I$ out of the components of the filtration~\eqref{1filtr}, we obtain 
\begin{equation}\label{2filtr}
 0\subseteq I^{\langle 1\rangle}/I\subseteq\ldots\subseteq I^{\langle d-1\rangle}/I\subseteq A,
\end{equation}
the \Defn{dimension filtration} of $A$. Notice that, if $I$ is not a radical ideal, then the primary decomposition is not necessarily unique. However, the following result (see~\cite[Proposition 2.2]{Schenzel99} and \cite[pp 100-103]{Eisenbud}) shows that the dimension filtration is independent from the choice of primary components and thus is unique. 
 
 \begin{lem}\label{ess} For a submodule $N$ of $A=R/I$ the following are equivalent
 \begin{enumerate}[\rm (a)]
 \item $N$ is the largest submodule of $A$ such that $\dim N\leq i$,
 \item $N=\mathrm{H}^0_{\mathfrak{a}_i}(A)$, where $\mathfrak{a}_i=\prod \{\mathfrak{p}\mid \mathfrak{p}\in\mathrm{Ass} I\ \& \ \dim R/\mathfrak{p}\leq i\}$,
 \item $N\cong A^\lir:=I^\lir/I$.
 \end{enumerate}
 In particular, filter ideals $I^\lir$ are independent from the choice of primary components. 
 \end{lem}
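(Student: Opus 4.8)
The plan is to establish the chain of implications (a) $\Rightarrow$ (b) $\Rightarrow$ (c) $\Rightarrow$ (a), after which the final ``in particular'' clause is immediate, since characterisation (a) makes no reference to any primary decomposition. Throughout I will freely use the standard facts that for a finitely generated graded module $M$ one has $\dim M = \max\{\dim R/\mathfrak p : \mathfrak p \in \mathrm{Ass}\, M\}$, and that $\mathrm H^0_{\mathfrak b}(M) = \{x \in M : \mathfrak b^k x = 0 \text{ for some } k\}$, so that $\mathrm{Ass}\, \mathrm H^0_{\mathfrak b}(M) = \{\mathfrak p \in \mathrm{Ass}\, M : \mathfrak p \supseteq \mathfrak b\}$.

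For (a) $\Leftrightarrow$ (b): the submodule $\mathrm H^0_{\mathfrak a_i}(A)$ consists of all elements of $A$ annihilated by a power of $\mathfrak a_i$, equivalently by a power of each $\mathfrak p \in \mathrm{Ass}\, I$ with $\dim R/\mathfrak p \le i$; hence its associated primes are exactly those $\mathfrak p \in \mathrm{Ass}\, A$ with $\dim R/\mathfrak p \le i$, so $\dim \mathrm H^0_{\mathfrak a_i}(A) \le i$. Conversely, if $N \subseteq A$ has $\dim N \le i$, then every associated prime $\mathfrak p$ of $N$ satisfies $\dim R/\mathfrak p \le i$ and lies in $\mathrm{Ass}\, A$, so $\mathfrak p \supseteq \mathfrak a_i$; since $N$ is a finitely generated submodule of $A$, a suitable power of $\mathfrak a_i$ annihilates $N$, giving $N \subseteq \mathrm H^0_{\mathfrak a_i}(A)$. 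Thus $\mathrm H^0_{\mathfrak a_i}(A)$ is the unique largest submodule of dimension at most $i$, which is precisely condition (a); in particular such a largest submodule exists and is well defined.

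For (b) $\Leftrightarrow$ (c): one computes directly with the fixed reduced primary decomposition $I = \bigcap_j \mathfrak q_j$. Write $I^{\langle i\rangle} = \bigcap_{\dim R/\mathfrak p_j > i} \mathfrak q_j$, so that $A^{\langle i\rangle} = I^{\langle i\rangle}/I \subseteq A$. The isomorphism theorem gives $I^{\langle i\rangle}/I \cong (R/I)\big/(I^{\langle i\rangle}/I)$ complementarily, but more useful is the identity $R/I^{\langle i\rangle} \cong R\big/\bigcap_{\dim R/\mathfrak p_j > i}\mathfrak q_j$, whose associated primes are the $\mathfrak p_j$ with $\dim R/\mathfrak p_j > i$. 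From $0 \to I^{\langle i\rangle}/I \to A \to R/I^{\langle i\rangle} \to 0$ one reads off that $\mathrm{Ass}(I^{\langle i\rangle}/I) \subseteq \mathrm{Ass}\, A$ consists exactly of those $\mathfrak p_j$ \emph{not} appearing in $\mathrm{Ass}(R/I^{\langle i\rangle})$, i.e. the $\mathfrak p_j$ with $\dim R/\mathfrak p_j \le i$ (here one uses that the decomposition is reduced, so every $\mathfrak p_j$ is genuinely associated to $A$). Hence $\dim A^{\langle i\rangle} \le i$, and moreover $\mathfrak a_i^{\,k}$ annihilates $A^{\langle i\rangle}$ for $k \gg 0$, while any element of $\mathrm H^0_{\mathfrak a_i}(A)$ is killed by a power of $\mathfrak a_i$ hence lies in $\bigcap_{\dim R/\mathfrak p_j > i}\mathfrak q_j / I = A^{\langle i\rangle}$; this gives $A^{\langle i\rangle} = \mathrm H^0_{\mathfrak a_i}(A)$, identifying (b) and (c). Since (a) has been shown equivalent to (b), the filter ideal $I^{\langle i\rangle}$ is characterised as the preimage in $R$ of the largest submodule of $A$ of dimension $\le i$, a description independent of the chosen primary components.

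The main obstacle I anticipate is the bookkeeping in the (b) $\Leftrightarrow$ (c) step: one must be careful that the primary decomposition is \emph{reduced} (irredundant) so that each radical $\mathfrak p_j$ really is an associated prime of $A$ and no spurious primes creep in, and one must correctly handle embedded primes, for which $\dim R/\mathfrak p_j$ can be small even though $\mathfrak q_j$ is not a minimal component — the inequality $\dim R/\mathfrak p_j \le i$ is exactly the right cut-off and the argument goes through verbatim. A secondary subtlety is the passage from ``annihilated by each $\mathfrak p$ in a finite set'' to ``annihilated by a power of the product $\mathfrak a_i$'', which is routine for finitely generated modules but should be stated explicitly.
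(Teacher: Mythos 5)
The paper itself gives no proof of this lemma (it simply cites Schenzel and Eisenbud), so your self-contained argument is judged on its own terms. Its architecture is sound and is essentially the standard one: (a) $\Leftrightarrow$ (b) via the torsion description of $\mathrm{H}^0_{\mathfrak a_i}(A)$ and the fact that the associated primes of $A$ containing $\mathfrak a_i$ are exactly those of dimension $\le i$, and (b) $\Leftrightarrow$ (c) by computing with the fixed reduced decomposition; the ``in particular'' clause then follows as you say. The (a) $\Leftrightarrow$ (b) half is correct as written.

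There is, however, one genuine gap in the (b) $\Leftrightarrow$ (c) step: you claim that from the exact sequence $0\to I^{\langle i\rangle}/I\to A\to R/I^{\langle i\rangle}\to 0$ one ``reads off'' that $\mathrm{Ass}(I^{\langle i\rangle}/I)$ consists exactly of the $\mathfrak p_j$ with $\dim R/\mathfrak p_j\le i$. That inference is invalid: the sequence only yields $\mathrm{Ass}(I^{\langle i\rangle}/I)\subseteq\mathrm{Ass}(A)\subseteq\mathrm{Ass}(I^{\langle i\rangle}/I)\cup\mathrm{Ass}(R/I^{\langle i\rangle})$, which does not exclude primes of dimension $>i$ from $\mathrm{Ass}(I^{\langle i\rangle}/I)$ --- and that exclusion is precisely what you use both to get $\dim A^{\langle i\rangle}\le i$ and to get that a power of $\mathfrak a_i$ kills $A^{\langle i\rangle}$. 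The repair is easy: set $K=\bigcap_{\dim R/\mathfrak p_j\le i}\mathfrak q_j$, note $I=I^{\langle i\rangle}\cap K$, so $I^{\langle i\rangle}/I\cong(I^{\langle i\rangle}+K)/K\hookrightarrow R/K$, and since a subintersection of an irredundant decomposition is irredundant, $\mathrm{Ass}(R/K)=\{\mathfrak p_j:\dim R/\mathfrak p_j\le i\}$; alternatively, pick $m$ with $\mathfrak p_j^{\,m}\subseteq\mathfrak q_j$ for all $j$ in that range, so $\mathfrak a_i^{\,m}\subseteq K$ and hence $\mathfrak a_i^{\,m}\,I^{\langle i\rangle}\subseteq K\cap I^{\langle i\rangle}=I$. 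Two smaller points: the sentence invoking ``the isomorphism theorem'' ($I^{\langle i\rangle}/I\cong (R/I)/(I^{\langle i\rangle}/I)$) is false as written, though unused; and the inclusion $\mathrm{H}^0_{\mathfrak a_i}(A)\subseteq A^{\langle i\rangle}$, which you assert with ``hence lies in'', deserves its one-line justification: if $\mathfrak a_i^{\,k}x\subseteq I\subseteq\mathfrak q_j$ with $\dim R/\mathfrak p_j>i$, then $\mathfrak a_i\not\subseteq\mathfrak p_j$ (a prime containing the product contains a factor, forcing dimension $\le i$), so primaryness of $\mathfrak q_j$ gives $x\in\mathfrak q_j$. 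With these repairs your proof is complete.
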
\qed
 
\begin{df}
Let $A=R/I$ be a standard $\mathbb{N}$-graded algebra. Then the $i$-th \Defn{unmixed layer} of $A$ is defined to be the $R$-module \(\mathit{U}_i(A)=I^{\langle i\rangle}/I^{\langle i-1\rangle}\)
for all $1\leq i\leq \dim A$.
\end{df}
\noindent The name we give to these modules is motivated by the following result.
\begin{lem}[{\cite[Corollary 2.3]{Schenzel99}}]The unmixed layers of $A$ are unmixed, that is, for any $i$ the associated primes of $\U_i(A)$ have the same height. Furthermore, $A$ is unmixed if and only if $\U_i(A)=0$ for all $i<d=\dim A$ and $\U_d(A)=A$.

\end{lem}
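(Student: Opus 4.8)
The plan is to pass to the submodule version of the dimension filtration, $0 = A^{\langle 0\rangle} \subseteq A^{\langle 1\rangle} \subseteq \cdots \subseteq A^{\langle d\rangle} = A$ with $A^\lir = I^\lir/I$, and to use throughout the characterisation supplied by Lemma~\ref{ess}: $A^\lir$ is (isomorphic to) the \emph{largest} submodule of $A$ of dimension at most $i$. By the third isomorphism theorem $\U_i(A) = I^\lir/I^{\langle i-1\rangle} \cong A^\lir/A^{\langle i-1\rangle}$, so the whole statement reduces to understanding the successive quotients of this chain of submodules of $A$.

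For the unmixedness assertion, fix $i$ with $1 \le i \le d$, assume $\U_i(A) \ne 0$, and take $\mathfrak{p} \in \mathrm{Ass}(\U_i(A))$. Since $A^\lir$ has dimension at most $i$, so does its quotient $\U_i(A)$, whence $\dim R/\mathfrak{p} \le i$. For the opposite inequality I would argue by contradiction: choose $0 \ne \bar m \in A^\lir/A^{\langle i-1\rangle}$ with $\mathrm{Ann}_R(\bar m) = \mathfrak{p}$, lift it to $m \in A^\lir$, and put $M = A^{\langle i-1\rangle} + Rm \subseteq A^\lir$. Then $M/A^{\langle i-1\rangle} \cong R/\mathfrak{p}$, so if $\dim R/\mathfrak{p} \le i-1$ one gets $\dim M \le \max\big(\dim A^{\langle i-1\rangle},\, \dim R/\mathfrak{p}\big) \le i-1$, and the maximality property of $A^{\langle i-1\rangle}$ forces $M \subseteq A^{\langle i-1\rangle}$, i.e. $\bar m = 0$ — a contradiction. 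Hence $\dim R/\mathfrak{p} = i$ for all $\mathfrak{p} \in \mathrm{Ass}(\U_i(A))$; as $R$ is a polynomial ring, $\dim R/\mathfrak{p} = i$ is equivalent to $\mathrm{ht}\,\mathfrak{p} = n - i$, so these primes all have the same height.

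For the second statement, note that $A$ is unmixed if and only if it has no nonzero submodule of dimension $< d$: a nonzero submodule $N$ with $\dim N < d$ has some $\mathfrak{p} \in \mathrm{Ass}(N) \subseteq \mathrm{Ass}(A)$ with $\dim R/\mathfrak{p} \le \dim N < d$, contradicting unmixedness, while conversely any associated prime of $A$ of dimension $< d$ yields an embedded copy $R/\mathfrak{p} \hookrightarrow A$ of such a submodule. By Lemma~\ref{ess} this is equivalent to $A^{\langle d-1\rangle} = 0$, hence to $A^\lir = 0$ for every $i \le d-1$ (since $A^\lir \subseteq A^{\langle d-1\rangle}$), which is exactly the statement that $\U_i(A) = A^\lir/A^{\langle i-1\rangle} = 0$ for $i < d$ together with $\U_d(A) = A/A^{\langle d-1\rangle} = A$. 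The only step where I expect to need care is the reverse dimension bound in the first part, namely verifying that the auxiliary module $M$ genuinely contradicts the maximality of $A^{\langle i-1\rangle}$; the remainder is bookkeeping with associated primes and supports.
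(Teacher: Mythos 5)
Your argument is correct, and it is worth noting that the paper itself offers no proof of this lemma at all: it is quoted from Schenzel (Corollary~2.3 of \cite{Schenzel99}), so your write-up actually supplies a self-contained justification where the paper only cites. The two halves of your proof are sound. For unmixedness of $\U_i(A)\cong A^\lir/A^{\langle i-1\rangle}$, the upper bound $\dim R/\mathfrak{p}\le i$ follows as you say from $\mathrm{Supp}(\U_i(A))\subseteq\mathrm{Supp}(A^\lir)$, and the lower bound via the auxiliary module $M=A^{\langle i-1\rangle}+Rm$ is exactly the right use of the maximality in Lemma~\ref{ess}(a): $\dim M\le\max\bigl(\dim A^{\langle i-1\rangle},\dim R/\mathfrak{p}\bigr)\le i-1$ would force $M\subseteq A^{\langle i-1\rangle}$ and hence $\bar m=0$. (The only implicit point you should make explicit is that ``largest'' in Lemma~\ref{ess}(a) means the \emph{maximum} submodule of dimension $\le i-1$, i.e.\ the sum of all such submodules, which is what licenses $M\subseteq A^{\langle i-1\rangle}$; and that in the polynomial ring, which is catenary and equidimensional, constant dimension of the $R/\mathfrak{p}$ is the same as constant height.) The second half is routine once one knows that unmixedness of $A$ is equivalent to the vanishing of $A^{\langle d-1\rangle}$, which you establish correctly via associated primes of submodules. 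Schenzel's own route is slightly different in emphasis --- he identifies $\mathrm{Ass}(\U_i(A))$ precisely as $\{\mathfrak{p}\in\mathrm{Ass}(A)\mid\dim R/\mathfrak{p}=i\}$ using the primary decomposition defining $I^\lir$ --- but your maximality-based argument reaches the same (in fact slightly stronger) conclusion that every associated prime of $\U_i(A)$ has dimension exactly $i$, and it has the advantage of relying only on the characterisation already recorded in Lemma~\ref{ess}.
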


\subsection{Hilbert Series and Bj\"{o}rner--Wachs Polynomial}
\noindent 
Let $A=R/I$ be a standard $\mathbb{N}$-graded algebra of Krull dimension $d$. Then there exists a polynomial $\mathrm{h}(A;t)\in\mathbb{Z}[t]$, the \Defn{$h$-polynomial} of $A$ such that the Hilbert series of $A$ has the following rational expression:
\begin{equation*}
\mathrm{Hilb}(A;t)=\frac{\mathrm{h}(A;t)}{(1-t)^d }.
\end{equation*} Let us also denote by $h_j(A)$ the coefficient of $t^j$ in $h(A;t)$.\\We now define our main object of study in this paper. To do so, the following lemma is needed. 

\begin{lem}
The $i$-th unmixed layer $\U_i(A)$ of $A$ is either zero or has Krull dimension $i$.
\end{lem}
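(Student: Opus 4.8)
The whole argument will run through Lemma~\ref{ess}, which identifies $A^{\langle j\rangle}:=I^{\langle j\rangle}/I$ with the \emph{largest} submodule of $A$ of Krull dimension at most $j$; in particular $\dim A^{\langle j\rangle}\le j$, and $A^{\langle j-1\rangle}\subseteq A^{\langle j\rangle}$, for every $j$. First I would record the trivial half: modding out $I$ identifies the $i$-th unmixed layer $\U_i(A)=I^{\langle i\rangle}/I^{\langle i-1\rangle}$ with the quotient module $A^{\langle i\rangle}/A^{\langle i-1\rangle}$, so $\U_i(A)$ is a homomorphic image of $A^{\langle i\rangle}$ and hence $\dim\U_i(A)\le\dim A^{\langle i\rangle}\le i$. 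All the content of the lemma therefore lies in the reverse inequality for nonzero layers.

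For that I would argue by contraposition: assume $\dim\U_i(A)\le i-1$ and deduce $\U_i(A)=0$. Applying to the short exact sequence $0\to A^{\langle i-1\rangle}\to A^{\langle i\rangle}\to\U_i(A)\to 0$ the additivity of Krull dimension for finitely generated modules, i.e.\ $\dim A^{\langle i\rangle}=\max\{\dim A^{\langle i-1\rangle},\dim\U_i(A)\}$ (itself a consequence of $\operatorname{Supp}A^{\langle i\rangle}=\operatorname{Supp}A^{\langle i-1\rangle}\cup\operatorname{Supp}\U_i(A)$), and using $\dim A^{\langle i-1\rangle}\le i-1$ from Lemma~\ref{ess}, we get $\dim A^{\langle i\rangle}\le i-1$. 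But then $A^{\langle i\rangle}$ is a submodule of $A$ of dimension at most $i-1$, so the maximality clause of Lemma~\ref{ess} forces $A^{\langle i\rangle}\subseteq A^{\langle i-1\rangle}$, hence $A^{\langle i\rangle}=A^{\langle i-1\rangle}$ and $\U_i(A)=0$. Equivalently, if $\U_i(A)\ne 0$ then $\dim\U_i(A)=i$, which is the claim.

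I do not expect a real obstacle: the statement is essentially a formal consequence of the characterisation of the filter ideals recalled in Lemma~\ref{ess} together with the elementary behaviour of Krull dimension along short exact sequences. The only point that deserves a moment's care is the boundary index $i=1$, where $A^{\langle 0\rangle}$ need not be zero — it is the finite-length submodule $\mathrm{H}^0_{\mathfrak m}(A)$ — but it still satisfies $\dim A^{\langle 0\rangle}\le 0$, so the very same chain of inequalities goes through unchanged.
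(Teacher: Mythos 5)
Your proposal is correct and follows essentially the same route as the paper: both arguments rest on the short exact sequence $0\to A^{\langle i-1\rangle}\to A^{\langle i\rangle}\to \U_i(A)\to 0$, the max-formula for Krull dimension along it, and the maximality characterisation of $A^{\langle j\rangle}$ from Lemma~\ref{ess}, with your contrapositive phrasing being only a cosmetic rearrangement of the paper's case split. If anything, your justification of the dimension formula via the union of supports is a cleaner reference than the paper's citation of the depth lemma, which strictly speaking concerns depth rather than dimension.
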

\begin{proof}
Consider the short exact sequence \[0\rightarrow A^{\langle i-1\rangle}\rightarrow A^{\langle i\rangle}\rightarrow\U_i(A)\rightarrow 0\] of graded $R$-modules. If $\dim A^{\langle i\rangle}<i$, then $A^{\langle i\rangle}=A^{\langle i-1\rangle}$ and hence $\U_i(A)=0$. Otherwise by applying the depth lemma~\cite[Proposition 1.2.9]{Bruns-Herzog}, we get \[\max\left(\dim A^{\langle i-1\rangle} , \dim\U_i\left(A\right)\right)= \dim A^{\langle i\rangle}=i.\]
\end{proof}

\begin{df}\label{BW} Let $A$ be a standard $\mathbb{N}$-graded $\k$-algebra of Krull dimension $d$. The bivariate polynomial 
\begin{equation}
\mathrm{BW}(A;t,w)=\sum_{i=0}^d \mathrm{h}(\U_i(A);t) w^i
\end{equation}
is called the
\Defn{Bj\"{o}rner--Wachs polynomial} of $A$. 
\end{df}

Note that if $A$ is an unmixed algebra, then $\U_d(A)=A$ and $\U_i(A)=0$ for $i\neq d$. Thus in this situation knowing the Hilbert series and the Bj\"{o}rner--Wachs polynomial are equivalent, more precisely one has $\BW(A;t,w)=\mathrm{h}(A;t)w^d$. This, of course, is not true in general. While we have

\begin{prop}\label{specialise}  The Bj\"{o}rner--Wachs polynomial specialises to the Hilbert series by putting $w:=1/(1-t)$, that is,
\[\mathrm{Hilb}(A;t)=\BW(A;t,1/(1-t)).\]
\end{prop}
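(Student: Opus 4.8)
The plan is to substitute $w:=1/(1-t)$ directly into Definition~\ref{BW} and recognise each resulting summand as the Hilbert series of the corresponding unmixed layer, so that the statement collapses to the additivity of Hilbert series along the dimension filtration. Explicitly,
\[
\mathrm{BW}(A;t,1/(1-t))=\sum_{i=0}^{d}\frac{\mathrm{h}(\U_i(A);t)}{(1-t)^{i}}.
\]
Here I would invoke the lemma immediately preceding Definition~\ref{BW}, which says that $\U_i(A)$ is either $0$ or has Krull dimension exactly $i$. If $\U_i(A)=0$ then $\mathrm{h}(\U_i(A);t)=0$ and the $i$-th summand equals $\mathrm{Hilb}(0;t)=0$; if $\dim\U_i(A)=i$ then, applying the defining property of the $h$-polynomial to the module $\U_i(A)$ (with $i$ playing the role of the Krull dimension), the $i$-th summand is exactly $\mathrm{Hilb}(\U_i(A);t)$. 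Hence in all cases $\mathrm{BW}(A;t,1/(1-t))=\sum_{i=0}^{d}\mathrm{Hilb}(\U_i(A);t)$.

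It remains to prove $\mathrm{Hilb}(A;t)=\sum_{i=0}^{d}\mathrm{Hilb}(\U_i(A);t)$, which I would deduce from the dimension filtration~\eqref{2filtr}. For each $1\le i\le d$ the unmixed layer sits in a short exact sequence $0\to A^{\langle i-1\rangle}\to A^{\langle i\rangle}\to\U_i(A)\to 0$ of finitely generated graded $R$-modules, so by additivity of the Hilbert series on short exact sequences $\mathrm{Hilb}(A^{\langle i\rangle};t)=\mathrm{Hilb}(A^{\langle i-1\rangle};t)+\mathrm{Hilb}(\U_i(A);t)$. Summing over $i$, the interior terms telescope; since the filtration \eqref{2filtr} starts at $0$ and ends at $A^{\langle d\rangle}=A$, this leaves precisely $\mathrm{Hilb}(A;t)=\sum_{i=1}^{d}\mathrm{Hilb}(\U_i(A);t)$, and the $i=0$ term may be appended for free as it vanishes. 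Combining with the previous paragraph gives the claim.

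I do not expect a genuine obstacle here: the proof is a short computation once the pieces are in place, and the only delicate point is that the substitution $w:=1/(1-t)$ correctly turns the weight $w^{i}$ into the denominator $(1-t)^{i}$ of $\mathrm{Hilb}(\U_i(A);t)$ — which works exactly because of the dimension lemma, and is the reason that lemma is recorded right before Definition~\ref{BW}. One should also keep in mind that the asserted equality is an identity of rational functions in $t$ (equivalently, of formal power series after expanding $1/(1-t)$), and that the convention $\mathrm{h}(0;t)=0$ makes the vanishing-layer summands harmless.
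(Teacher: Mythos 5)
Your proposal is correct and follows essentially the same route as the paper: additivity of Hilbert series along the short exact sequences $0\to A^{\langle i-1\rangle}\to A^{\langle i\rangle}\to\U_i(A)\to 0$, telescoping over the dimension filtration, and the identification $\mathrm{Hilb}(\U_i(A);t)=\mathrm{h}(\U_i(A);t)/(1-t)^i$ justified by the dimension lemma. Your treatment is only slightly more explicit than the paper's (spelling out the vanishing layers and the role of the preceding lemma), which is fine.
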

\begin{proof}
From the short exact sequence \[0\rightarrow A^{\langle i-1\rangle}\rightarrow A^{\langle i\rangle}\rightarrow\U_i(A)\rightarrow 0\] of graded $R$-modules, we get 
\[\mathrm{Hilb}(A^{\langle i\rangle};t)=\mathrm{Hilb}(A^{\langle i-1\rangle};t)+\mathrm{Hilb}(\U_i(A);t).\] Now summing over all $i$ we have \[\mathrm{Hilb}(A;t)=\sum_i \mathrm{Hilb}(\U_i(A);t)=\sum_i\frac{ \mathrm{h}(\U_i(A);t)}{(1-t)^i}.\]
\end{proof}

It is well known that the Hilbert series of $A$ can be computed from its Betti table (i.e. the table of graded Betti numbers). However, unlike the Hilbert series, the Bj\"{o}rner--Wachs polynomial can not be obtained from the graded Betti numbers. To show this, one may take $A=R/I$ to be componentwise linear but not sequentially Cohen--Macaulay. Then the Betti table of $A$ is stable under passing to the generic initial ideal with respect to reverse lexicographic order, but the Bj\"{o}rner--Wachs polynomial is not stable. See Theorem~\ref{sqcm} below and \cite[Theorem 8.2.22]{Herzog-Hibi} for more information. The following concrete example is computed using Macaulay2~\cite{M2}. 

\begin{ex}

Let $R=\mathbb{Q}[x_1,x_2,x_3,x_4,x_5,x_6]$ and let \[I=\langle x_1x_2x_3,x_1x_4,x_2x_5,x_3x_6,x_4x_5,x_4x_6,x_5x_6 \rangle\] be an ideal of $R$. Then the generic initial ideal of $I$ with respect to the reverse lexicographic order is \[\mathrm{gin}(I)=\langle x_1^2,x_1x_2,x_2^2,x_1x_3,x_2x_3,x_3^2,x_1x_4^2 \rangle.\] The Betti tables of $R/I$ and $R/\mathrm{gin}(I)$ 
\[
\begin{tabular}{c|ccccc}
&1 & 7&11&6&1 \\
\hline
0:&1 & $\cdot$ & $\cdot$ & $\cdot$ & $\cdot$\\
1:& $\cdot$ &6 & 8 & 3 & $\cdot$\\
2:& $\cdot$ & 1 & 3 & 3 & 1
\end{tabular}
\]
coincide; $I$ is componentwise linear. On the other hand, one has $\BW(R/I;t,w)=w^3+tw^3-t^3w^3$ while $\BW(R/\mathrm{gin}(I);t,w)=tw^2+w^3+t^2w^2+2tw^3$.

\end{ex}
%\newpage
%\section{How to Find the Dimension Filtration?}
%Let $M$ be a $d$-dimensional finitely generated $\mathbb{Z}$-graded $S$-module. Let %$\bigcap_{j=0}^s Q_j=0$ be a reduced primary decomposition of $0$ in $M$ and assume that %$Q_j$ is $\mathfrak{p}_j$-primary. Note that it follows that 
%$\mathrm{Ass}  (M)=\{\mathfrak{p}_1, \ldots, \mathfrak{p}_s\}$. 
%It is shown in~\cite[Proposition 2.2]{Sch} that with the notation as above one has $$M^{\langle %i\rangle}=\bigcap_{\mathrm{dim}(S/\mathfrak{p}_j)>i} Q_j.$$
%\newpage

%%%%%%%%%%%%%%%%%%%%%%%%%%%%%%%%%%%%%%%%%%%%%%%%%%%%%%%%%%%%%%

\section{The Face Rings of Simplicial Complexes}\label{faceRing}
In this section we study the Bj\"{o}rner--Wachs polynomial of the face rings of simplicial complexes. \\ Let us start by briefly recalling some notions. A \Defn{simplicial complex} $\Delta$ on the vertex set $[n]:=\{1,2,\ldots,n\}$ is a subset of the power set $2^{[n]}$ that is closed under passing to subsets, that is, if $F\in\Delta$ and $G\subseteq F$ then $G\in\Delta$. The elements of $\Delta$ are called \Defn{faces}. The inclusion-wise maximal faces are called \Defn{facets}. The set of facets of $\Delta$ is denoted by $\mathcal{F}(\Delta)$. Clearly, a simplicial complex is uniquely determined from its set of facets. The \Defn{dimension} $\dim F$ of a face $F$ in $\Delta$ is defined to be one less than its cardinality and the \Defn{dimension} of $\Delta$ itself is equal to the maximum of dimension of its faces. 

Let $\Delta$ be a $(d-1)$-dimensional simplicial complex on the vertex set $[n]$. The \Defn{Stanley--Reisner} ideal $I_\Delta$ of $\Delta$ is the ideal of $R$ generated by all monomials $x_G:=\Pi_{i\in G} x_i$, where $G\notin\Delta$. The quotient ring $\k[\Delta]=R/I_\Delta$ is called the \Defn{face ring} of $\Delta$. It can be shown that $\k[\Delta]$ has Krull dimension equal to $d=\dim\Delta+1$.

 Let us denote by $f_i(\Delta)$ the number of $i$-dimensional faces of $\Delta$. Then the Hilbert series of $\k[\Delta]$ can be computed from these combinatorial invariants~\cite[Proposition 6.2.1]{Herzog-Hibi}. In particular, one has 
\begin{equation*}
\h(\k[\Delta];t)=\sum f_{i-1}(\Delta) (1-t)^{d-i}t^i.
\end{equation*}
For a face $\sigma\in \Delta$ let the \Defn{degree} of $\sigma$ be defined as the biggest cardinality of the faces containing $\sigma$. Bj\"{o}rner \& Wachs~\cite{Bj-WI} defined the doubly indexed $f$-number $f_{i,j}(\Delta)$ to be the number of faces of $\Delta$ of degree $i$ and cardinality $j$. They also defined the \Defn{$h$-triangle} $\mathfrak{h}(\Delta)$ of $\Delta$ to be the triangular integer array $\mathfrak{h}(\Delta)=(h_{i,j}(\Delta))_{0\leq j\leq i\leq d}$, where
\begin{equation*}
 h_{i,j}(\Delta)=\sum_k(-1)^{j-k}{i-k\choose j-k}f_{i,k}(\Delta).
\end{equation*}

\begin{lem}\label{triangle} If $\Delta$ is a simplicial complex, then the doubly indexed $f$- and $h$- numbers of $\Delta$ satisfy the following equation:
\[\sum_i\sum_j h_{i,j}(\Delta)w^it^j=\sum_i\sum_j f_{i,j}(\Delta)w^it^j(1-t)^{i-j}.\]
\end{lem}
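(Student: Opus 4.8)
The plan is to start from the left-hand side, substitute the combinatorial definition of the doubly indexed $h$-numbers, and then recognise an inner sum as a binomial expansion. Concretely, I would write
\[
\sum_i\sum_j h_{i,j}(\Delta)w^it^j=\sum_i\sum_j\left(\sum_k(-1)^{j-k}\binom{i-k}{j-k}f_{i,k}(\Delta)\right)w^it^j,
\]
and then interchange the order of summation so that the sum over $j$ becomes the innermost one, collecting the factor $w^i f_{i,k}(\Delta)$ outside:
\[
\sum_i\sum_j h_{i,j}(\Delta)w^it^j=\sum_i w^i\sum_k f_{i,k}(\Delta)\left(\sum_j(-1)^{j-k}\binom{i-k}{j-k}t^j\right).
\]

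The second step is to evaluate the inner sum by the change of variable $\ell=j-k$. Since $\binom{i-k}{\ell}=0$ whenever $\ell<0$ or $\ell>i-k$, the sum over $j$ is effectively a finite sum over $0\le\ell\le i-k$, and the binomial theorem gives
\[
\sum_j(-1)^{j-k}\binom{i-k}{j-k}t^j=t^k\sum_{\ell=0}^{i-k}\binom{i-k}{\ell}(-t)^\ell=t^k(1-t)^{i-k}.
\]
Substituting this back yields
\[
\sum_i\sum_j h_{i,j}(\Delta)w^it^j=\sum_i\sum_k f_{i,k}(\Delta)w^it^k(1-t)^{i-k},
\]
which is the right-hand side after renaming the index $k$ to $j$.

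There is no real obstacle here: the only point requiring a little care is the bookkeeping of summation ranges, namely checking that the sums are genuinely finite (the $f_{i,j}(\Delta)$ vanish outside $0\le j\le i\le d$, and the binomial coefficients vanish outside the appropriate range) so that the interchange of summations and the use of the finite binomial theorem are both justified. I would also remark, if desired, that the identity is equivalent to a triangular unipotent change of basis, so it is in fact invertible, recovering the $f_{i,j}$ from the $h_{i,j}$ — but this is not needed for the statement as given.
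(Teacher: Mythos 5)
Your computation is correct and is exactly the ``straightforward'' argument the paper leaves to the reader: substitute the definition of $h_{i,j}$, swap the order of summation, and evaluate the inner sum as $t^k(1-t)^{i-k}$ via the binomial theorem. Nothing further is needed.
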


\begin{proof}
The proof is straightforward and we leave it to the reader. 
\end{proof}

The reduced primary decomposition of Stanley--Reisner ideals has a simple description. For a subset $G$ of $[n]$ denote by $\mathfrak{p}_G$ the monomial prime ideal generated by all $x_i$ such that $i\notin G$. It can be shown (see~\cite[Lemma 1.5.4]{Herzog-Hibi}, for instance) that

\[I_\Delta=\bigcap_{F\in\mathcal{F}(\Delta)}\mathfrak{p}_F\] 
is the unique reduced primary decomposition of $I_\Delta$. 

The dimension filtration of $\k[\Delta]$ can be described also by combinatorial means. For a simplicial complex $\Delta$ let us denote by $\Delta^{\langle i\rangle}$ the subcomplex of $\Delta$ generated by all facets of dimension $\geq i$, that is, $\mathcal{F}(\Delta^{\langle i\rangle})=\left\{F\in\mathcal{F}(\Delta)\mid \dim F\geq i\right\}$.
\begin{lem} Let $\Delta$ be a simplicial complex. Then the $i$-th unmixed layer $\left(\k[\Delta]\right)^{\langle i\rangle}$ of its face ring is isomorphic to $I_{\Delta^{\langle i\rangle}}/I_\Delta$.
\end{lem}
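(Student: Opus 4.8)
The plan is to prove the sharper statement that the filter ideal of the Stanley--Reisner ideal is itself a Stanley--Reisner ideal, namely
\[
I_\Delta^{\langle i\rangle}=I_{\Delta^{\langle i\rangle}},
\]
as ideals of $R$. Granting this, the asserted description of $\left(\k[\Delta]\right)^{\langle i\rangle}$ is immediate: by definition $\left(\k[\Delta]\right)^{\langle i\rangle}=I_\Delta^{\langle i\rangle}/I_\Delta$, and since $\Delta^{\langle i\rangle}$ is a subcomplex of $\Delta$ it has at least as many non-faces, so $I_\Delta\subseteq I_{\Delta^{\langle i\rangle}}$ and the quotient $I_{\Delta^{\langle i\rangle}}/I_\Delta$ makes sense as a submodule of $\k[\Delta]=R/I_\Delta$, equal to (in particular isomorphic to) $I_\Delta^{\langle i\rangle}/I_\Delta$.

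First I would recall the reduced primary decomposition $I_\Delta=\bigcap_{F\in\mathcal{F}(\Delta)}\mathfrak{p}_F$ noted above; each component $\mathfrak{p}_F$ is already prime, so it is its own radical and plays the role of both $\mathfrak{q}_j$ and $\mathfrak{p}_j$ in the definition of the dimension filtration in Section~\ref{Pre}. Since $R/\mathfrak{p}_F\cong\k[x_j:j\in F]$ is a polynomial ring in $|F|=\dim F+1$ variables, we have $\dim R/\mathfrak{p}_F=\dim F+1$. Substituting into the definition of the filter ideal gives
\[
I_\Delta^{\langle i\rangle}=\bigcap_{\dim R/\mathfrak{p}_F>i}\mathfrak{p}_F=\bigcap_{\substack{F\in\mathcal{F}(\Delta)\\ \dim F\geq i}}\mathfrak{p}_F .
\]
Now the index set $\{F\in\mathcal{F}(\Delta):\dim F\geq i\}$ is, by the very definition of $\Delta^{\langle i\rangle}$, precisely its set of facets $\mathcal{F}(\Delta^{\langle i\rangle})$ (these faces are pairwise incomparable, being distinct maximal faces of $\Delta$, hence remain maximal in the subcomplex they generate). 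Applying the primary-decomposition description of Stanley--Reisner ideals once more, this time to the complex $\Delta^{\langle i\rangle}$, yields
\[
I_{\Delta^{\langle i\rangle}}=\bigcap_{G\in\mathcal{F}(\Delta^{\langle i\rangle})}\mathfrak{p}_G=\bigcap_{\substack{F\in\mathcal{F}(\Delta)\\ \dim F\geq i}}\mathfrak{p}_F=I_\Delta^{\langle i\rangle},
\]
which is the desired equality, and then $\left(\k[\Delta]\right)^{\langle i\rangle}=I_\Delta^{\langle i\rangle}/I_\Delta=I_{\Delta^{\langle i\rangle}}/I_\Delta$ follows.

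There is no serious obstacle here; the argument is just a matter of matching up the two incarnations of the combinatorial data. The only points that deserve a moment's care are the dimension bookkeeping (the algebraic condition $\dim R/\mathfrak{p}_F>i$ unwinds to the combinatorial condition $\dim F\geq i$, not $>i$) and the boundary cases $i=0$ and $i=d$, where one reads the intersection over an empty index set as $R$, consistently with $I_\Delta^{\langle d\rangle}=R$ and with $\Delta^{\langle d\rangle}$ being the void complex; one should also note that the decomposition of $I_{\Delta^{\langle i\rangle}}$ used above is automatically the reduced one, since that is exactly what the cited lemma on Stanley--Reisner ideals guarantees.
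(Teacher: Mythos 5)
Your proposal is correct and follows essentially the same route as the paper: identify $I_\Delta^{\langle i\rangle}$ from the reduced primary decomposition $I_\Delta=\bigcap_{F\in\mathcal{F}(\Delta)}\mathfrak{p}_F$, observe $\dim R/\mathfrak{p}_F=|F|$ so that $\dim R/\mathfrak{p}_F>i$ exactly when $F\in\mathcal{F}(\Delta^{\langle i\rangle})$, and conclude $I_\Delta^{\langle i\rangle}=I_{\Delta^{\langle i\rangle}}$. Your extra care about the $>i$ versus $\geq i$ bookkeeping and the boundary cases is fine but adds nothing beyond the paper's argument.
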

\begin{proof} The ideal $\left(I_\Delta\right)^\lir$ is the intersection of those primes $\mathfrak{p}_F$ such that $\dim(R/\mathfrak{p}_F)>i$. However, $\dim\left(R/\mathfrak{p}_F\right)$ is equal to the cardinality of $F$. Hence, $\dim(R/\mathfrak{p}_F)>i$ if and only if $F\in\mathcal{F}\left(\Delta^\lir\right)$ and one has $\left(I_\Delta\right)^\lir=I_{\Delta^{\langle i\rangle}}$.

\end{proof}
\begin{thm}\label{BWDelta} Let $\Delta$ be a simplicial complex. Then the Bj\"{o}rner--Wachs polynomial of $\k[\Delta]$ can be computed from the $f$-triangle of $\Delta$ via $$\BW(\k[\Delta];t,w)=\sum f_{i,j}(\Delta)w^it^j(1-t)^{i-j}.$$ In particular, $h_{i,j}(\Delta)$ is the coefficient of $w^it^j$ in the Bj\"{o}rner--Wachs polynomial of $\k[\Delta]$. 

\end{thm}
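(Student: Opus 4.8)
The plan is to compute the $h$-polynomial of each unmixed layer $\U_i\!\left(\k[\Delta]\right)$ explicitly in terms of the doubly indexed $f$-numbers, substitute into Definition~\ref{BW}, and then read off the ``in particular'' clause from Lemma~\ref{triangle}. By the preceding lemma the filter module $\left(\k[\Delta]\right)^{\langle i\rangle}$ is isomorphic to $I_{\Delta^{\langle i\rangle}}/I_\Delta$, so that
\[
\U_i\!\left(\k[\Delta]\right)=\left(\k[\Delta]\right)^{\langle i\rangle}\big/\left(\k[\Delta]\right)^{\langle i-1\rangle}\cong I_{\Delta^{\langle i\rangle}}\big/I_{\Delta^{\langle i-1\rangle}}
\]
as graded $R$-modules, with the $i=0$ term contributing nothing. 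The first — and essentially only non-routine — step is to produce a monomial $\k$-basis of this module. Since both ideals are monomial, such a basis is given by the monomials $x^a$ with $\mathrm{supp}(a)\notin\Delta^{\langle i\rangle}$ but $\mathrm{supp}(a)\in\Delta^{\langle i-1\rangle}$. Recalling that the facets of $\Delta^{\langle i\rangle}$ are precisely the facets of $\Delta$ of cardinality $\ge i+1$, the first condition says that every facet of $\Delta$ containing $\mathrm{supp}(a)$ has cardinality at most $i$, while the second says that $\mathrm{supp}(a)$ lies in some facet of $\Delta$ of cardinality at least $i$. Together these force $\mathrm{supp}(a)\in\Delta$ with $\deg\mathrm{supp}(a)=i$, where $\deg$ is the degree of a face; hence $\U_i\!\left(\k[\Delta]\right)$ has $\k$-basis $\{\,x^a:\ \mathrm{supp}(a)\in\Delta,\ \deg\mathrm{supp}(a)=i\,\}$.

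Next I would group this basis according to the support. For a fixed face $\sigma$ of cardinality $j$ the monomials with support exactly $\sigma$ contribute $\bigl(t/(1-t)\bigr)^{j}$ to the Hilbert series, so
\[
\mathrm{Hilb}\!\left(\U_i(\k[\Delta]);t\right)=\sum_{j} f_{i,j}(\Delta)\,\frac{t^{j}}{(1-t)^{j}}.
\]
Every face of degree $i$ has cardinality $j\le i$, so multiplying by $(1-t)^{i}$ yields a genuine polynomial; and since $\U_i(\k[\Delta])$ is either zero or of Krull dimension $i$ (by the lemma preceding Definition~\ref{BW}), this polynomial is exactly its $h$-polynomial:
\[
\h\!\left(\U_i(\k[\Delta]);t\right)=\sum_{j} f_{i,j}(\Delta)\,t^{j}(1-t)^{i-j}
\]
(when $\U_i(\k[\Delta])=0$ all $f_{i,j}(\Delta)$ vanish and the identity is trivial). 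Summing over $i$ and invoking Definition~\ref{BW} gives
\[
\BW\!\left(\k[\Delta];t,w\right)=\sum_{i=0}^{d}\h\!\left(\U_i(\k[\Delta]);t\right)w^{i}=\sum_{i,j} f_{i,j}(\Delta)\,w^{i}t^{j}(1-t)^{i-j},
\]
which is the asserted formula; the ``in particular'' clause then follows at once from Lemma~\ref{triangle}, which rewrites the right-hand side as $\sum_{i,j} h_{i,j}(\Delta)\,w^{i}t^{j}$.

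The main obstacle is the combinatorial bookkeeping of the first paragraph: one must translate the two monomial-membership conditions coming from $I_{\Delta^{\langle i\rangle}}$ and $I_{\Delta^{\langle i-1\rangle}}$ into the single statement $\deg\mathrm{supp}(a)=i$, and be mildly careful with the degenerate cases $i=0$ and $\U_i(\k[\Delta])=0$. Everything after that is the layerwise analogue of the classical computation of $\h(\k[\Delta];t)=\sum f_{i-1}(\Delta)\,t^{i}(1-t)^{d-i}$, followed by a purely formal application of Lemma~\ref{triangle}.
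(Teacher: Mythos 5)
Your proof is correct, and it reaches the formula by a somewhat different middle step than the paper. Both arguments share the same skeleton: identify $\U_i(\k[\Delta])\cong I_{\Delta^{\langle i\rangle}}/I_{\Delta^{\langle i-1\rangle}}$, compute $\h(\U_i(\k[\Delta]);t)$ in terms of faces of degree exactly $i$, and finish with Lemma~\ref{triangle}. Where the paper computes this Hilbert series homologically --- via the short exact sequence $0\to\k[\Delta^{\langle i\rangle}]\to\k[\Delta^{\langle i-1\rangle}]\to I_{\Delta^{\langle i\rangle}}/I_{\Delta^{\langle i-1\rangle}}\to 0$ together with the classical formula $\h(\k[\Delta];t)=\sum f_{j-1}(\Delta)t^j(1-t)^{d-j}$, so that the coefficient of $w^i$ becomes the difference $f_{j-1}(\Delta^{\langle i-1\rangle})-f_{j-1}(\Delta^{\langle i\rangle})$, which counts faces of cardinality $j$ and degree $i$ --- you instead exhibit a monomial $\k$-basis of the quotient of the two Stanley--Reisner ideals directly, translate the two membership conditions into $\deg\mathrm{supp}(a)=i$, and sum the geometric series $\left(t/(1-t)\right)^{|\sigma|}$ over faces $\sigma$ of degree $i$. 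The combinatorial content (faces of $\Delta^{\langle i-1\rangle}$ not in $\Delta^{\langle i\rangle}$ are exactly the faces of degree $i$) is the same in both; your route is more self-contained, in effect reproving the face-ring Hilbert series formula layer by layer, while the paper's is shorter because it leans on that known formula. Your handling of the degenerate cases ($i=0$, $\U_i=0$, the empty face in the top layer) and of the passage from $\mathrm{Hilb}$ to the $h$-polynomial via the Krull dimension of the layer is sound, so no gaps remain.
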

\begin{proof}
First, note that the coefficient of $w^i$ in the Bj\"{o}rner--Wachs polynomial of $\k[\Delta]$ is 
\[\Big[\BW(\k[\Delta];t,w)\Big]_{w^i}=\h(\U_i(\k[\Delta]);t)=\h\left(I_{\Delta^{\langle i\rangle}}/I_{\Delta^{\langle i-1\rangle}};t\right),\]
since, we have $\U_i(\k[\Delta])\cong I_{\Delta^{\langle i\rangle}}/I_{\Delta^{\langle i-1\rangle}}$ as graded $R$-modules. 
Now from the exact sequence 
\[0\rightarrow  \k[\Delta^{\langle i\rangle}]\rightarrow  \k[\Delta^{\langle i-1\rangle}]\rightarrow  I_{\Delta^{\langle i\rangle}}/I_{\Delta^{\langle i-1\rangle}}\rightarrow 0\] of graded $R$-modules, it follows that 
\[\mathrm{Hilb}\left(I_{\Delta^{\langle i\rangle}}/I_{\Delta^{\langle i-1\rangle}};t\right)=\mathrm{Hilb}(\k[\Delta^{\langle i-1\rangle};t)-\mathrm{Hilb}(\k[\Delta^{\langle i-1\rangle}];t).\] Thus
\[\h(I_{\Delta^{\langle i\rangle}}/I_{\Delta^{\langle i-1\rangle}};t)=\frac{\h(\k[\Delta^{\langle i-1\rangle}];t)-\h(\k[\Delta^{\langle i\rangle}];t)}{(1-t)^{d-i}}.\]
We can now conclude that 
\[\Big[\BW(\k[\Delta];t,w)\Big]_{w^i}=\sum_j \left(f_{j-1}(\Delta^{\langle i-1\rangle})-f_{j-1}(\Delta^{\langle i\rangle})\right)(1-t)^{i-j}t^j.\]
However, $f_{j-1}(\Delta^{\langle i-1\rangle})-f_{j-1}(\Delta^{\langle i\rangle})$ is equal to the number of faces of $\Delta$ of cardinality $j$ and degree $i$. The result now follows by using Lemma~\ref{triangle}.

\end{proof}

%%%%%%%%%%%%%%%%%%%%%%%%%%%%%%%%%%%%%%%%%%%%%%%%%%%%%%%%%%%%%%

\section{Generic Initial Ideal and Borel-fixed Ideals}\label{SSI}

In the sequel we discuss some basic properties of Borel-fixed ideals. These properties will be of use in the next section. 

We are only concerned with the reverse lexicographic order induced by $x_1<\ldots<x_n$ on the set $\mbox{Mon}(R)$ of all monomials in $R$. In particular, $\mathrm{in}(I)$ and $\mathrm{gin}(I)$ denote the initial and generic initial ideal of $I$ with respect to this total ordering, respectively. The reader may consult~\cite[Chapter 15]{Eisenbud} or ~\cite{Green} for the precise definition and properties of the generic initial ideal and Borel-fixed ideals.

\begin{lem}\label{dpsborel}
Let $J\subseteq R$ be a Borel-fixed ideal. Then $\dps\left(R/J\right)$ is equal to the smallest integer $i$ such that $J$ is a proper subset of $J^\lir$.
\end{lem}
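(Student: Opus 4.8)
The plan is to rephrase the asserted equality entirely in terms of the associated primes of $R/J$, which turns it into two inequalities: one that holds for any finitely generated module, and one that uses the special behaviour of Borel-fixed ideals under specialisation of variables.

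First I would reformulate the right-hand side. Writing $0=\bigcap_j\mathfrak q_j$ for the reduced (hence irredundant) primary decomposition of $J$, with $\mathfrak p_j=\sqrt{\mathfrak q_j}$, the filter ideal $J^\lir=\bigcap_{\dim R/\mathfrak p_j>i}\mathfrak q_j$ always contains $J$, and by irredundancy it strictly contains $J$ exactly when some component $\mathfrak q_j$ with $\dim R/\mathfrak p_j\le i$ actually occurs in the intersection; since the $\mathfrak p_j$ are precisely the elements of $\mathrm{Ass}(R/J)$, the smallest such $i$ equals $i_0:=\min\{\dim R/\mathfrak p:\mathfrak p\in\mathrm{Ass}(R/J)\}$. (Alternatively this can be read off from Lemma~\ref{ess}(b), since $J^\lir/J=\mathrm H^0_{\mathfrak a_i}(R/J)$ is nonzero for exactly these $i$.) So the lemma becomes the statement $\dps(R/J)=i_0$. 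The inequality $\dps(R/J)\le i_0$ needs nothing about $J$: for every associated prime $\mathfrak p$ of a finitely generated module one has $\dps\le\dim R/\mathfrak p$ (\cite[Proposition~1.2.13]{Bruns-Herzog}), and taking the minimum over $\mathrm{Ass}(R/J)$ gives it.

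For the reverse inequality I would invoke the structure theory of Borel-fixed ideals. Let $m$ be the largest index of a variable dividing a minimal monomial generator of $J$; it is a standard fact about the reverse lexicographic order that $\dps(R/J)=n-m$ for Borel-fixed $J$ (see \cite[Chapter~15]{Eisenbud} or \cite{Green}; for strongly stable $J$ this follows from the minimality of the Eliahou--Kervaire resolution together with Auslander--Buchsbaum). Since $x_{m+1},\dots,x_n$ divide no minimal generator of $J$, they form a regular sequence on $R/J$ (a variable missing from the generators of a monomial ideal is a nonzerodivisor modulo it). Putting $\bar R=\k[x_1,\dots,x_m]$ and letting $\bar J\subseteq\bar R$ be the monomial ideal with the same minimal generators as $J$, we get $R/(J+(x_{m+1},\dots,x_n))\cong\bar R/\bar J$, whose depth is $\dps(R/J)-(n-m)=0$, so $\bar{\mathfrak m}=(x_1,\dots,x_m)\bar R\in\mathrm{Ass}(\bar R/\bar J)$. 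Then I would lift this associated prime: choose a monomial $v\in\bar R$ with $\bar J:v=\bar{\mathfrak m}$; since $v$ and $\bar J$ involve only $x_1,\dots,x_m$ we have $v\notin J$ and $x_1v,\dots,x_mv\in J$, hence $(x_1,\dots,x_m)\subseteq J:v\subsetneq R$. The nonzero submodule $R\bar v\cong R/(J:v)$ of $R/J$ therefore has an associated prime $\mathfrak p\in\mathrm{Ass}(R/J)$ with $\dim R/\mathfrak p\le\dim R/(J:v)\le\dim R/(x_1,\dots,x_m)=n-m$, so $i_0\le n-m=\dps(R/J)$. Combined with the easy inequality, this finishes the proof.

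The one point that is not elementary is the cited depth formula $\dps(R/J)=n-m$ for Borel-fixed ideals, which I would simply quote from the standard references. In the strongly stable case (for instance when $\operatorname{char}\k=0$) even that can be bypassed: if $u$ is a minimal generator divisible by $x_m$, then $v=u/x_m\notin J$ and $x_iv\in J$ for every $i\le m$ directly by strong stability, so $(x_1,\dots,x_m)\subseteq J:v$ and the same submodule argument produces an associated prime of dimension at most $n-m$.
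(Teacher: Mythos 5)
Your proof is correct, and it leans on the same underlying structure theory of Borel-fixed ideals (Eisenbud, Ch.~15) as the paper, but it is packaged differently. The paper's proof is three lines: it observes that the smallest $i$ with $J\subsetneq J^\lir$ equals $\dim R/\mathfrak p$ for the maximal associated prime $\mathfrak p$, and then cites Eisenbud's Corollary~15.25 to say that $\mathfrak p=(x_1,\dots,x_{n-i})$ is a segment prime and $x_{n-i+1},\dots,x_n$ is a maximal $(R/J)$-regular sequence, which identifies both quantities with $i$ in one stroke. You instead split the statement into two inequalities: the bound $\dps(R/J)\le i_0=\min\{\dim R/\mathfrak p:\mathfrak p\in\mathrm{Ass}(R/J)\}$, which as you note needs nothing about Borel-fixedness (Bruns--Herzog 1.2.13), and the reverse bound, which you get from the depth formula $\dps(R/J)=n-m$ for Borel-fixed ideals together with an explicit socle-lifting argument producing an associated prime containing $(x_1,\dots,x_m)$; that lifting step is carried out correctly (extension--contraction of monomial ideals gives $v\notin J$, and $R/(J{:}v)\hookrightarrow R/J$ gives the small associated prime). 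What your version buys is a clear separation of the general module-theoretic part from the Borel-specific input, plus a genuinely self-contained argument in the strongly stable (e.g.\ characteristic zero) case via $v=u/x_m$; what the paper's version buys is brevity, since the segment-prime description of $\mathrm{Ass}(R/J)$ delivers both directions simultaneously. One small imprecision: the quoted depth formula is a property of Borel-fixed monomial ideals themselves, not of the reverse lexicographic order (revlex only enters when one applies the lemma to $\mathrm{gin}(I)$), and it is exactly the content the paper extracts from Corollary~15.25, so citing that corollary would be the cleanest reference.
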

\begin{proof}

If $\mathfrak{p}$ is the maximal associated prime of $J$, then the condition in the statement is equivalent to $\dim R/\mathfrak{p}=i$. Thus, it follows from~\cite[corollary 15.25]{Eisenbud} that $\mathfrak{p}=\langle x_1,\ldots,x_{n-i}\rangle$ and $x_{n-i+1},\ldots,x_n$ is a maximal $(R/J)$-regular sequence. In particular $\dps\left(R/J\right)=i$.

\end{proof}

\begin{lem}\label{lemdim}
Let $I\subseteq J$ be two homogeneous ideals in $R$. Then $\dim\left(\mathrm{gin}(J)/\mathrm{gin}(I)\right)\leq\dim\left(J/I\right)$.
\end{lem}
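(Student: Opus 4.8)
The plan is to reduce the statement to a comparison of Hilbert functions. Since $I \subseteq J$, there is a short exact sequence $0 \to J/I \to R/I \to R/J \to 0$ of graded $R$-modules, so $\operatorname{Hilb}(J/I;t) = \operatorname{Hilb}(R/I;t) - \operatorname{Hilb}(R/J;t)$. Taking generic initial ideals preserves Hilbert functions (the quotient $R/\operatorname{gin}(K)$ has the same Hilbert function as $R/K$ for any homogeneous ideal $K$, since passing to an initial ideal is a flat degeneration), so likewise $\operatorname{Hilb}(\operatorname{gin}(J)/\operatorname{gin}(I);t) = \operatorname{Hilb}(R/\operatorname{gin}(I);t) - \operatorname{Hilb}(R/\operatorname{gin}(J);t)$, and this equals $\operatorname{Hilb}(J/I;t)$. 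Hence $J/I$ and $\operatorname{gin}(J)/\operatorname{gin}(I)$ have identical Hilbert functions. But $\operatorname{gin}(J)/\operatorname{gin}(I)$ is a quotient of two monomial (indeed Borel-fixed) ideals, so I want to squeeze its dimension using this numerical data together with the monomial structure.

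The key point is that the Krull dimension of a finitely generated graded module $M$ equals the degree of its Hilbert polynomial plus one (with the convention $\deg 0 = -1$, i.e. $\dim M = 0$ when the Hilbert polynomial vanishes identically and $M \ne 0$ is finite-dimensional), and the Hilbert polynomial is determined by the Hilbert function for large degrees. Since $J/I$ and $\operatorname{gin}(J)/\operatorname{gin}(I)$ share the same Hilbert function, they share the same Hilbert polynomial, hence the same dimension — provided both are nonzero, or provided we handle the degenerate cases. So in fact one should get equality $\dim(\operatorname{gin}(J)/\operatorname{gin}(I)) = \dim(J/I)$, not merely $\le$. The weaker inequality in the statement is presumably all that is needed downstream, and is immediate once the Hilbert function identity is in hand: if $\dim(J/I) = e$ then the Hilbert polynomial has degree $e-1$, forcing $\dim(\operatorname{gin}(J)/\operatorname{gin}(I)) = e$ as well; and if $J/I = 0$ then $I = J$ so $\operatorname{gin}(I) = \operatorname{gin}(J)$ and the quotient is zero too.

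Concretely the steps are: (1) write down the two short exact sequences and deduce $\operatorname{Hilb}(J/I;t) = \operatorname{Hilb}(\operatorname{gin}(J)/\operatorname{gin}(I);t)$, citing the flat-degeneration fact that $\operatorname{gin}$ preserves Hilbert functions (e.g. \cite[Chapter 15]{Eisenbud}); (2) recall that dimension of a graded module is read off from the eventual polynomial behaviour of its Hilbert function; (3) conclude the dimensions agree, handling the trivial case $I=J$ separately. I do not expect any real obstacle here: the only subtlety is the bookkeeping convention for the zero module and for modules of dimension zero (finite length), where the Hilbert polynomial is identically zero — this is why the statement is phrased as an inequality, and it is harmless since in that case $\dim(J/I) = 0 \ge \dim(\operatorname{gin}(J)/\operatorname{gin}(I))$ would need $\operatorname{gin}(J)/\operatorname{gin}(I)$ to also be finite length, which again follows from the matching (eventually zero) Hilbert function. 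So the ``hard part'' is essentially just stating the Hilbert-function-to-dimension dictionary carefully; the mathematical content is the flatness of the Gröbner degeneration.
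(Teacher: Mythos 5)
Your argument is correct, but it is a genuinely different route from the paper's. The paper fixes one generic change of coordinates so that $\mathrm{gin}(I)=\mathrm{in}(I)$ and $\mathrm{gin}(J)=\mathrm{in}(J)$, then works with annihilators: it uses $\dim(J/I)=\dim\bigl(R/(I:J)\bigr)=\dim\bigl(R/\mathrm{in}(I:J)\bigr)$ together with the containment $\mathrm{in}(I:J)\subseteq \mathrm{in}(I):\mathrm{in}(J)$ (proved by taking initial terms of products), which bounds $\dim\bigl(\mathrm{in}(J)/\mathrm{in}(I)\bigr)=\dim\bigl(R/(\mathrm{in}(I):\mathrm{in}(J))\bigr)$ from above. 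You instead compare Hilbert functions: since $\mathrm{gin}$ preserves the Hilbert function of $R/I$ and $R/J$, the modules $J/I$ and $\mathrm{gin}(J)/\mathrm{gin}(I)$ have identical Hilbert functions, and the dimension of a finitely generated graded module is read off from the degree of its Hilbert polynomial, so you even get equality $\dim\bigl(\mathrm{gin}(J)/\mathrm{gin}(I)\bigr)=\dim(J/I)$ — strictly stronger than the stated inequality, and perfectly adequate for its use in Proposition~\ref{gindim}. The one point you should make explicit is the containment $\mathrm{gin}(I)\subseteq\mathrm{gin}(J)$, which is needed for your short exact sequence $0\to \mathrm{gin}(J)/\mathrm{gin}(I)\to R/\mathrm{gin}(I)\to R/\mathrm{gin}(J)\to 0$ (and indeed for the statement to make sense); it follows by choosing a single generic change of coordinates valid for both ideals, exactly the implicit step the paper also takes. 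With that sentence added, your proof is complete; the trade-off is that the paper's colon-ideal argument avoids invoking the Hilbert-polynomial/dimension dictionary, while yours avoids the annihilator computation and yields the sharper conclusion.
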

\begin{proof}
 After a generic change of coordinates, we may assume that $\mathrm{gin}(I)=\mathrm{in}(I)$ and $\mathrm{gin}(J)=\mathrm{in}(J)$. We have
\[
\dim\left(J/I\right)=\dim\left(R/I:J\right)=\dim\left(R/\mathrm{in}(I:J)\right).
\]
Thus, it suffices to show that $\mathrm{in}(I:J)\subseteq \mathrm{in}(I):\mathrm{in}(J)$. Let $f$ be a polynomial in $I:J$. Then for all $g\in J$, the polynomial $g\cdot f$ belongs to $I$. Hence, $\mathrm{in}(g\cdot f)=\mathrm{in}(g)\cdot\mathrm{in}(f)$ belongs to $\mathrm{in}(I)$ for all $g\in J$. Therefore, we have $\mathrm{in}(f)$ belongs to $\mathrm{in}(I):\mathrm{in}(J)$. 

\end{proof}

\begin{prop}\label{gindim}
Let $I$ be a homogeneous ideal in $R$. Then $\mathrm{gin}\left(I^\lir\right)\subseteq\mathrm{gin}\left(I\right)^\lir$ for all $0\leq i<\dim\left(R/I\right)$.
\end{prop}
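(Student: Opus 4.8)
The goal is to show $\GIn(I^\lir) \subseteq \GIn(I)^\lir$ for $0 \le i < \dim(R/I)$. The plan is to reduce everything to a statement about generic initial ideals of the filter ideals, and then exploit the combination of Lemma~\ref{lemdim} (behaviour of dimension under $\GIn$ of a quotient) and Lemma~\ref{dpsborel} (depth characterisation for Borel-fixed ideals). First I would recall that after a generic change of coordinates we may simultaneously compute $\In(I) = \GIn(I)$ and $\In(I^\lir) = \GIn(I^\lir)$, and that $\GIn(I)$ is Borel-fixed, so Lemma~\ref{dpsborel} applies to it. The key observation is that $\GIn(I)^\lir$, being the intersection of the primary components of $\GIn(I)$ whose primes have dimension $> i$, is itself a Borel-fixed (indeed strongly stable) ideal, and by the structure of associated primes of Borel-fixed ideals (\cite[Corollary 15.25]{Eisenbud}) it is exactly the ideal obtained by "saturating away" the variables of small index; concretely, for a Borel-fixed ideal $J$ one has $J^\lir = (J : x_{n-i}^\infty)$ or an analogous colon/saturation description in terms of the last $i$ variables.

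The main step is then a containment of colon ideals. Since $I^\lir \supseteq I$ and $I^\lir / I = \mathrm{H}^0_{\mathfrak{a}_i}(A)$ has dimension $\le i$ by Lemma~\ref{ess}, we have $\dim(I^\lir/I) \le i$, hence by Lemma~\ref{lemdim} also $\dim(\GIn(I^\lir)/\GIn(I)) \le i$. Now $\GIn(I) \subseteq \GIn(I^\lir)$ (monotonicity of $\GIn$ under inclusion, again valid after the generic change of coordinates), so $\GIn(I^\lir)$ is an ideal containing $\GIn(I)$ with $\dim(\GIn(I^\lir)/\GIn(I)) \le i$. On the other hand, $\GIn(I)^\lir$ is by definition the \emph{largest} submodule of $R/\GIn(I)$ of dimension $\le i$, via the module isomorphism in Lemma~\ref{ess}(a)/(c) applied to the algebra $R/\GIn(I)$. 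Therefore $\GIn(I^\lir)/\GIn(I) \subseteq \GIn(I)^\lir/\GIn(I)$ as submodules of $R/\GIn(I)$, which gives exactly $\GIn(I^\lir) \subseteq \GIn(I)^\lir$.

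I expect the main obstacle to be justifying that $\dim(\GIn(I^\lir)/\GIn(I)) \le i$ is genuinely governed by $\dim(I^\lir/I) \le i$ via Lemma~\ref{lemdim}: Lemma~\ref{lemdim} is stated for a pair $I \subseteq J$ and yields $\dim(\GIn(J)/\GIn(I)) \le \dim(J/I)$, so one needs $I \subseteq I^\lir$ (clear from~\eqref{1filtr}) and that $\GIn$ of the two is computed in the same coordinate system — this is where I would be careful, invoking that a single generic change of coordinates computes $\GIn$ for finitely many ideals at once. A secondary point requiring care is the appeal to Lemma~\ref{ess} in the form "largest submodule of dimension $\le i$": one must check the hypothesis $i < \dim(R/I)$ is what makes $\GIn(I)^\lir$ a proper filter ideal (so the characterisation is non-vacuous) and that $A^\lir$ in Lemma~\ref{ess} is precisely $I^\lir/I$, which is stated there. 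Once these bookkeeping points are in place, the argument is just "apply Lemma~\ref{lemdim}, then use the maximality in Lemma~\ref{ess}", with no further computation.
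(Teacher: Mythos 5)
Your proposal is correct and follows essentially the same route as the paper: apply Lemma~\ref{lemdim} to the inclusion $I\subseteq I^\lir$ to get $\dim(\mathrm{gin}(I^\lir)/\mathrm{gin}(I))\leq i$, then invoke the maximality characterisation of the filter module in Lemma~\ref{ess} applied to $R/\mathrm{gin}(I)$ to conclude $\mathrm{gin}(I^\lir)\subseteq\mathrm{gin}(I)^\lir$. The additional remarks about saturation descriptions of Borel-fixed filter ideals are not needed for this argument, but the core steps match the paper's proof exactly.
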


\begin{proof} 
If $I^\lir=I$, then there is nothing to prove. Otherwise, $I^\lir/I$ has Krull dimension $i$. So, it follows from Lemma~\ref{lemdim} that 
\(
\dim\left(\mathrm{gin}(I^\lir)/\mathrm{gin}(I)\right)
\)
is less than or equal to $i$. It follows now from Lemma~\ref{ess} that 
\(
\left(\mathrm{gin}(I^\lir)/\mathrm{gin}(I)\right)\subseteq \left(\mathrm{gin}(I)^\lir/\mathrm{gin}(I)\right).
\) Therefore, $\mathrm{gin}\left(I^\lir\right)$ is a subset of $\mathrm{gin}\left(I\right)^\lir$ as desired.  

\end{proof}

For an ideal $J$ of $R$ and a polynomial $f\in R$, let us denote by $(J:f^{\infty})$ the \Defn{saturation} of $I$ with respect to $f$, that is, the ideal
\begin{equation*}
(J:f^{\infty})=\left\{g\in R\mid   f^{s}\cdot g\in I\text{ for some }s>0\right\}.
\end{equation*}
Next, we describe the dimension filtration of Borel-fixed ideals. 

\begin{lem}\label{saturation}
Let $J$ be a Borel-fixed ideal and assume that $R/J$ has Krull dimension $d$. Then one has
\(J^{\langle i+1\rangle}=(J^\lir:x_{n-i}^{\infty})\) for all $0\leq i\leq d-1$.
\end{lem}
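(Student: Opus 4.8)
The plan is to compute both sides from a single fixed primary decomposition of $J$, using that the associated primes of a Borel-fixed ideal are of a very restricted shape, so that the whole dimension filtration becomes visible in that decomposition and passing from one layer to the next is a single saturation.

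First I would record the structural input, which is already used in the proof of Lemma~\ref{dpsborel} and comes from~\cite[Corollary~15.25]{Eisenbud}: every associated prime of $R/J$ is of the form $\langle x_1,\dots,x_k\rangle$, and hence $\dim R/\langle x_1,\dots,x_k\rangle=n-k$. Fix an irredundant primary decomposition $J=\bigcap_j\mathfrak q_j$ and put $\mathfrak p_j=\sqrt{\mathfrak q_j}=\langle x_1,\dots,x_{k_j}\rangle$; by Lemma~\ref{ess} the ideals $J^{\langle\,\cdot\,\rangle}$ are independent of this choice, so there is no loss in working with it. Directly from the definition, $J^{\langle i\rangle}=\bigcap_{\dim R/\mathfrak p_j>i}\mathfrak q_j=\bigcap_{k_j<n-i}\mathfrak q_j$, and hence $J^{\langle i+1\rangle}$ is obtained from $J^{\langle i\rangle}$ by deleting precisely the components with $k_j=n-i-1$, i.e.\ the dimension-$(i+1)$ components (if any).

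The second step is the behaviour of saturation. It commutes with finite intersections, and for a $\mathfrak p$-primary ideal $\mathfrak q\neq R$ one has $(\mathfrak q:f^{\infty})=R$ when $f\in\mathfrak p$ and $(\mathfrak q:f^{\infty})=\mathfrak q$ when $f\notin\mathfrak p$, since the zerodivisors of $R/\mathfrak q$ are exactly the elements of $\mathfrak p$. As $\langle x_1,\dots,x_m\rangle$ contains $x_{n-i-1}$ if and only if $m\geq n-i-1$, saturating $J^{\langle i\rangle}=\bigcap_{k_j<n-i}\mathfrak q_j$ by $x_{n-i-1}$ turns into $R$ exactly the components with $k_j=n-i-1$ and leaves the others fixed; therefore $(J^{\langle i\rangle}:x_{n-i-1}^{\infty})=\bigcap_{k_j<n-i-1}\mathfrak q_j=\bigcap_{\dim R/\mathfrak p_j>i+1}\mathfrak q_j=J^{\langle i+1\rangle}$, which is the assertion. (For the same reason one even gets $J^{\langle i+1\rangle}=(J:x_{n-i-1}^{\infty})$ directly from $J$.) The extremal case $i=d-1$ serves as a sanity check: $J^{\langle d-1\rangle}$ is then the unique dimension-$d$ primary component, with radical $\langle x_1,\dots,x_{n-d}\rangle\ni x_{n-d}$, and its saturation by $x_{n-d}$ is $R=J^{\langle d\rangle}$.

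I do not expect a serious obstacle: the single nontrivial ingredient, the description of $\mathrm{Ass}(R/J)$ for Borel-fixed $J$, is already available, and the rest is the formal calculus of primary decompositions. The one place that genuinely needs care — and where it is easy to slip — is lining up the index of the distinguished variable (the ``$n-i-1$'' above, essentially the last variable generating the maximal associated prime of $J^{\langle i\rangle}$) with the indexing convention used for the dimension filtration.
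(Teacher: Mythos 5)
Your overall strategy is sound and genuinely different from the paper's: you compute the saturation componentwise from a single irredundant primary decomposition, using that every associated prime of a Borel-fixed ideal is a segment prime $\langle x_1,\dots,x_k\rangle$ and that saturation commutes with finite intersections, turning into $R$ exactly the components whose radical contains the chosen variable. The paper instead splits into the cases $J^{\langle i+1\rangle}=J^{\langle i\rangle}$ and $J^{\langle i+1\rangle}\neq J^{\langle i\rangle}$ and, in the second case, gets the two inclusions from $J^{\langle i\rangle}=J^{\langle i+1\rangle}\cap\mathfrak q\supseteq J^{\langle i+1\rangle}\cdot\mathfrak q$ together with $(\mathfrak q:x^{\infty})=R$; both arguments rest on the same structural input about $\mathrm{Ass}(R/J)$, and your computation has the small bonus of exhibiting $J^{\langle i+1\rangle}$ as a saturation of $J$ itself.

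The genuine problem is the final identification ``which is the assertion''. What you prove is $J^{\langle i+1\rangle}=(J^{\langle i\rangle}:x_{n-i-1}^{\infty})$, whereas the lemma asserts $J^{\langle i+1\rangle}=(J^{\langle i\rangle}:x_{n-i}^{\infty})$, and with the filtration as you (and Section~2 of the paper) define it, $J^{\langle i\rangle}=\bigcap_{\dim(R/\mathfrak p_j)>i}\mathfrak q_j$, these are not the same claim: every component of $J^{\langle i\rangle}$ then has radical $\langle x_1,\dots,x_k\rangle$ with $k\le n-i-1$, so $x_{n-i}$ lies in no associated prime of $R/J^{\langle i\rangle}$ and $(J^{\langle i\rangle}:x_{n-i}^{\infty})=J^{\langle i\rangle}$, which for $i=d-1$ is never $J^{\langle d\rangle}=R$. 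Indeed your own sanity check at $i=d-1$ saturates by $x_{n-d}=x_{n-i-1}$, not by $x_{n-i}=x_{n-d+1}$, so it confirms your formula rather than the displayed one. The printed variable $x_{n-i}$, and the paper's proof (which takes the component being removed to be $(x_1,\dots,x_{n-i})$-primary, hence of dimension $i$), correspond to the shifted convention $J^{\langle i\rangle}=\bigcap_{\dim(R/\mathfrak p_j)\ge i}\mathfrak q_j$. So you have in fact hit a real off-by-one mismatch between the statement and the Section~2 definition; you flag it as ``easy to slip'' but do not resolve it, and as written your argument does not establish the stated identity. To finish, commit explicitly to one convention: either keep the Section~2 definition and state the conclusion with $x_{n-i-1}$, or adopt the convention implicit in the paper's proof, in which the component removed in passing from $J^{\langle i\rangle}$ to $J^{\langle i+1\rangle}$ is the dimension-$i$ one with radical $\langle x_1,\dots,x_{n-i}\rangle$, and rerun your (otherwise correct) componentwise computation, which then yields the printed $x_{n-i}$.
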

\begin{proof}
There are two possibilities that we treat separately: either $J^{\langle i+1\rangle}=J^\lir$ or $J^\lir$ is a proper subset of $J^{\langle i+1\rangle}$.
\paragraph*{Case 1: ($J^{\langle i+1\rangle}=J^\lir$).} In this case the variable $x_{n-i}$ cannot appear in the minimal generators of $J^\lir$. Thus, one has $J^{\langle i+1\rangle}=J^\lir=(J^\lir:x_{n-i}^{\infty})$. 
\paragraph*{Case 2: ($J^{\langle i+1\rangle}\neq J^\lir$).} In this case, there exists an $(x_1,\ldots,x_{n-i})$-primary ideal $\mathfrak{q}$ such that $J^{\langle i+1\rangle}\cap \mathfrak{q}=J^\lir$. Since a power of $x_{n-i}$ should be among the minimal generators of $\mathfrak{q}$, we have that $(\mathfrak{q}:x_{n-i}^{\infty})=R$. Thus, one has $(J^\lir: x_{n-i}^{\infty})\subseteq (J^{\langle i+1\rangle}:x_{n-i}^{\infty})\cap (\mathfrak{q}:x_{n-i}^{\infty})=J^{\langle i+1\rangle}$.\\
On the other hand, we have 
$J^\lir=\left(J^{\langle i+1\rangle}\cap \mathfrak{q}\right)\supseteq \left(J^{\langle i+1\rangle}\cdot \mathfrak{q}\right)$. Hence, one has also
\[(J^\lir: x_{n-i}^{\infty})\supseteq (J^{\langle i+1\rangle}\cdot \mathfrak{q}: x_{n-i}^{\infty})\supseteq J^{\langle i+1\rangle},\]
as desired.
\end{proof}

\begin{cor}\label{zir}
Let $I\subseteq J$ be two Borel-fixed ideals in $R$. Then $I^\lir\subseteq J^\lir$ for all $0\leq i\leq \dim(R/J)$.
\end{cor}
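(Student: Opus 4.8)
The plan is to deduce the containment $I^{\langle i\rangle}\subseteq J^{\langle i\rangle}$ from Lemma~\ref{saturation} by induction on $i$, reducing everything to the elementary fact that $I\subseteq J$ implies $(I:x^\infty)\subseteq(J:x^\infty)$ for any variable $x$. First I would set up the induction on $i$. For the base case $i=0$ there is nothing to prove, since $I^{\langle 0\rangle}=I\subseteq J=J^{\langle 0\rangle}$ by hypothesis. Now suppose $I^{\langle i\rangle}\subseteq J^{\langle i\rangle}$ for some $i$ with $0\le i< \dim(R/J)$; I want to conclude $I^{\langle i+1\rangle}\subseteq J^{\langle i+1\rangle}$. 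Both $I$ and $J$ are Borel-fixed, so Lemma~\ref{saturation} applies to each and gives $I^{\langle i+1\rangle}=(I^{\langle i\rangle}:x_{n-i}^\infty)$ and $J^{\langle i+1\rangle}=(J^{\langle i\rangle}:x_{n-i}^\infty)$. Since the saturation operation $(-:x_{n-i}^\infty)$ is monotone with respect to inclusion, the inductive hypothesis $I^{\langle i\rangle}\subseteq J^{\langle i\rangle}$ immediately yields $(I^{\langle i\rangle}:x_{n-i}^\infty)\subseteq(J^{\langle i\rangle}:x_{n-i}^\infty)$, i.e.\ $I^{\langle i+1\rangle}\subseteq J^{\langle i+1\rangle}$, completing the induction.

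One technical point that needs care is the range of validity of Lemma~\ref{saturation}: it is stated for $0\le i\le d-1$ where $d=\dim(R/J)$, so when I apply it to $I$ I should note that $\dim(R/I)\ge\dim(R/J)$ whenever $I\subseteq J$ (a larger ideal has a smaller quotient), so the index $i$ in the inductive step lies in the valid range for $I$ as well as for $J$. For the top index $i=\dim(R/J)$, the claimed inclusion $I^{\langle i\rangle}\subseteq J^{\langle i\rangle}=R$ is trivial, so the induction only needs to be pushed up to $i=\dim(R/J)$ and no issue arises there. I would spell this observation out in a sentence so the reader sees why Lemma~\ref{saturation} is legitimately invoked at each step.

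I do not expect a serious obstacle here; the only thing to be vigilant about is not to confuse the two filtrations (the filter ideals of $I$ versus those of $J$ live in the same ring $R$ but are computed from different primary decompositions), and to make sure the monotonicity of saturation is stated for the specific element $x_{n-i}$ used in Lemma~\ref{saturation} rather than silently switching elements between the two ideals — but since Lemma~\ref{saturation} uses the same variable $x_{n-i}$ for both $I$ and $J$, this is automatic. The whole argument is a two-line induction once Lemma~\ref{saturation} is in hand.
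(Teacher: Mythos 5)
Your proof is correct and takes essentially the same route as the paper: the paper's own proof is just the one-line remark that the corollary follows from Lemma~\ref{saturation} by induction on $i$, and your write-up supplies the routine details (base case, monotonicity of saturation with respect to inclusion, and the observation that $\dim(R/I)\geq\dim(R/J)$ keeps the index in the valid range when applying the lemma to $I$). Nothing needs to change.
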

\begin{proof}
The assertion follows easily from Lemma~\ref{saturation} by using induction on $i$.
\end{proof}

%%%%%%%%%%%%%%%LinearQuotient%%%%%%%%%%%%%%%
%\note{show how to compute BW-polynomial for ideals with a linear quotient} 

%%%%%%%%%%%%%%%%%%%%%%%%%%%%%%

%================================== SECTION: charSCM ===============================

\section{Sequentially Cohen--Macaulay Algebras}\label{charSCM}

In this section, we characterise sequentially Cohen--Macaulay standard $\mathbb{N}$-graded algebras by means of the effect of the generic initial ideal on the Bj\"{o}rner--Wachs polynomial. More precisely, we show that $A$ is sequentially Cohen--Macaulay if and only if it has stable Bj\"{o}rner--Wachs polynomial under passing to generic initial ideal with respect to reverse lexicographic order. Recall that, if $I$ is a homogeneous ideal of $R$, then $A=R/I$ is \Defn{sequentially Cohen--Macaulay} if and only if for all $0\leq i\leq \dim(R/I)$, the $i$-th unmixed layer $\U_i(A)$ of $A$ is either zero or Cohen--Macaulay of dimension $i$. Note that if $J$ is Borel-fixed, then $A=R/J$ is sequentially Cohen--Macaulay. In particular, $R/\mathrm{gin}(I)$ is sequentially Cohen--Macaulay for all homogeneous ideals $I\subseteq R$.  

We start by giving a few conditions equivalent to the property of being sequentially Cohen--Macaulay. 

\begin{prop}\label{char}
Let $I\subseteq R$ be a homogeneous ideal and assume that $A=R/I$ has Krull dimension $d$. Then the following are equivalent:
\begin{enumerate}[\rm (a)]
\item\label{parta} $A$ is sequentially Cohen--Macaulay;
\item\label{partb} $\dps\left(R/I^\lir\right)\geq i+1$ for all $0\leq i<d$;
\item\label{partcc} $\mathrm{gin}\left(I^\lir\right)=\mathrm{gin}\left(I^\lir\right)^\lir$ for all $0\leq i<d$;
\item\label{partc} $\mathrm{gin}\left(I^\lir\right)=\mathrm{gin}\left(I\right)^\lir$ for all $0\leq i<d$;
\item\label{partd} $\mathrm{Hilb}(R/\mathrm{gin}(I^\lir);t)=\mathrm{Hilb}(R/\mathrm{gin}(I)^\lir;t)$ for all $0\leq i<d$;
\item\label{parte} $\mathrm{Hilb}(R/I^\lir;t)=\mathrm{Hilb}(R/\mathrm{gin}(I)^\lir;t)$ for all $0\leq i<d$.

\end{enumerate}
\end{prop}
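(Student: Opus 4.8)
The plan is to establish the cycle of implications $(\ref{parta})\Rightarrow(\ref{partb})\Rightarrow(\ref{partcc})\Rightarrow(\ref{partc})\Rightarrow(\ref{partd})\Rightarrow(\ref{parte})\Rightarrow(\ref{parta})$, leaning on the Borel-fixed machinery of Section~\ref{SSI} (especially Lemmas~\ref{dpsborel}, \ref{saturation}, and Proposition~\ref{gindim}) and on the fact that passing to $\mathrm{gin}$ preserves Hilbert series.

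First I would treat $(\ref{parta})\Leftrightarrow(\ref{partb})$. If $A$ is sequentially Cohen--Macaulay, then for each $i$ the module $A^\lir=I^\lir/I$ is either zero or Cohen--Macaulay of dimension $i$; together with the short exact sequence $0\to A^\lir\to A\to R/I^\lir\to 0$ and induction on the length of the dimension filtration, one deduces that $\mathrm{depth}(R/I^\lir)\geq i+1$. Conversely, the depth condition in (\ref{partb}), applied at consecutive values $i-1$ and $i$ and combined with the depth lemma on $0\to \U_i(A)\to R/I^{\langle i-1\rangle}\to R/I^\lir\to 0$, forces $\U_i(A)$ to be Cohen--Macaulay of dimension $i$ (or zero); this is essentially the standard characterisation of sequential Cohen--Macaulayness via the filter ideals, and I would cite Schenzel~\cite{Schenzel99} for the precise bookkeeping.

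Next, $(\ref{partb})\Rightarrow(\ref{partcc})$: here I use that $\mathrm{gin}$ preserves depth for reverse lexicographic order (a theorem of Bayer--Stillman, available via \cite[Chapter~15]{Eisenbud}), so $\mathrm{depth}(R/\mathrm{gin}(I^\lir))\geq i+1$; since $\mathrm{gin}(I^\lir)$ is Borel-fixed, Lemma~\ref{dpsborel} then says $\mathrm{gin}(I^\lir)=\mathrm{gin}(I^\lir)^\lir$. For $(\ref{partcc})\Rightarrow(\ref{partc})$ I would show $\mathrm{gin}(I^\lir)=\mathrm{gin}(I)^\lir$ by a double inclusion: the inclusion $\subseteq$ is exactly Proposition~\ref{gindim}, and for $\supseteq$ one combines $\mathrm{gin}(I)\subseteq\mathrm{gin}(I^\lir)$ with Corollary~\ref{zir} (both sides Borel-fixed) to get $\mathrm{gin}(I)^\lir\subseteq\mathrm{gin}(I^\lir)^\lir=\mathrm{gin}(I^\lir)$, using the hypothesis (\ref{partcc}). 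The implications $(\ref{partc})\Rightarrow(\ref{partd})$ and $(\ref{partd})\Rightarrow(\ref{parte})$ are then immediate: the first is trivial, and the second uses only that $\mathrm{Hilb}(R/I^\lir;t)=\mathrm{Hilb}(R/\mathrm{gin}(I^\lir);t)$ since $\mathrm{gin}$ preserves Hilbert functions.

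The final and most delicate step is $(\ref{parte})\Rightarrow(\ref{parta})$. Here I have Hilbert-series equalities $\mathrm{Hilb}(R/I^\lir;t)=\mathrm{Hilb}(R/\mathrm{gin}(I)^\lir;t)$ for all $i$ and want to recover the depth statement (\ref{partb}). Since $\mathrm{gin}(I)$ is Borel-fixed, $R/\mathrm{gin}(I)$ is sequentially Cohen--Macaulay, so $\mathrm{depth}(R/\mathrm{gin}(I)^\lir)\geq i+1$, i.e.\ $\mathrm{Hilb}(R/\mathrm{gin}(I)^\lir;t)$ is divisible by $(1-t)^{i+1}$ once written over the appropriate denominator — more precisely, the dimension of $R/\mathrm{gin}(I)^\lir$ is at most $i$ while its depth is $\geq i+1$ would be absurd unless it is Cohen--Macaulay of dimension $i$. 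The point is that the Hilbert series of $R/\mathrm{gin}(I)^\lir$, hence of $R/I^\lir$, has the form $h(t)/(1-t)^{i}$ with $h(1)\neq 0$ (or vanishes), forcing $\dim(R/I^\lir)\leq i$, and then one shows $\mathrm{depth}(R/I^\lir)\geq i+1$ by descending induction on $i$: knowing the conclusion for $i+1$, the short exact sequence $0\to I^{\langle i+1\rangle}/I^\lir\to R/I^\lir\to R/I^{\langle i+1\rangle}\to 0$ together with the depth lemma and the Hilbert-series control pins down the depth. The main obstacle, which I would address carefully, is that a Hilbert-series coincidence does not by itself give a depth inequality for a general algebra; the leverage comes entirely from the fact that the comparison is against the Borel-fixed ideal $\mathrm{gin}(I)$, whose filter ideals are themselves Cohen--Macaulay by Lemma~\ref{dpsborel}, so one is comparing $R/I^\lir$ with an algebra of known depth and equal Hilbert series, and then invoking the (easy) fact that an algebra whose Hilbert series agrees with that of a Cohen--Macaulay algebra of dimension $i$ has Krull dimension $i$, combined with the filtration sequences to upgrade dimension bounds to depth bounds.
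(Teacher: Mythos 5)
Your handling of \eqref{parta}$\Leftrightarrow$\eqref{partb}, of \eqref{partb}$\Rightarrow$\eqref{partcc}, and of \eqref{partcc}$\Rightarrow$\eqref{partc} is sound and essentially the paper's own argument (filtration exact sequences plus the depth lemma; Lemma~\ref{dpsborel} together with preservation of depth under revlex gin; Proposition~\ref{gindim} combined with Corollary~\ref{zir}). The genuine gap is your closing step \eqref{parte}$\Rightarrow$\eqref{parta}. The concrete claims you make there are false: for $0\leq i<d$ the filter ideal $I^\lir$ retains \emph{every} primary component of dimension $>i$, so $R/I^\lir$ and $R/\mathrm{gin}(I)^\lir$ always have Krull dimension $d$ --- it is the layer $I^\lir/I$ that has dimension at most $i$. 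Hence the Hilbert series in question have the form $h(t)/(1-t)^d$, not $h(t)/(1-t)^i$, and nothing forces $\dim(R/I^\lir)\leq i$. Likewise, Lemma~\ref{dpsborel} does not make $R/\mathrm{gin}(I)^\lir$ Cohen--Macaulay: it gives $\dps(R/\mathrm{gin}(I)^\lir)\geq i+1$, but the dimension is $d$, so this ring is Cohen--Macaulay only when all associated primes of $\mathrm{gin}(I)$ of dimension $>i$ have dimension $d$. With these statements gone, the proposed descending induction has no leverage: a coincidence of Hilbert series by itself bounds neither the depth of $R/I^\lir$ nor anything structural about $I^\lir$, and you never supply the mechanism that converts the numerical equality into such information.

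The missing idea --- and the route the paper takes --- is to convert \eqref{parte} into the ideal-theoretic statement \eqref{partc} instead of aiming directly at a depth bound. Since gin preserves Hilbert functions, \eqref{parte} is equivalent to \eqref{partd}, i.e.\ $\mathrm{Hilb}(R/\mathrm{gin}(I^\lir);t)=\mathrm{Hilb}(R/\mathrm{gin}(I)^\lir;t)$; Proposition~\ref{gindim} gives the containment $\mathrm{gin}(I^\lir)\subseteq\mathrm{gin}(I)^\lir$, and an inclusion of homogeneous ideals with equal Hilbert functions is an equality, which is \eqref{partc}. One then returns to \eqref{parta} through pieces you already have: the chain $\mathrm{gin}(I)\subseteq\mathrm{gin}(I^\lir)\subseteq\mathrm{gin}(I)^\lir$ together with Corollary~\ref{zir} yields $\mathrm{gin}(I^\lir)^\lir=\mathrm{gin}(I)^\lir$ unconditionally, so \eqref{partc} implies \eqref{partcc}; Lemma~\ref{dpsborel} plus $\dps(R/I^\lir)=\dps(R/\mathrm{gin}(I^\lir))$ turns \eqref{partcc} into \eqref{partb}; and \eqref{partb}$\Rightarrow$\eqref{parta} is your depth-lemma argument. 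In short, your forward implications are fine, but the cycle must be closed through \eqref{partc}, not by extracting depth from Hilbert series.
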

\begin{proof}
First note that the implications \eqref{partd} $\iff$ \eqref{parte} and \eqref{partc} $\implies$ \eqref{partd} are clear. On the other hand, by Proposition~\ref{gindim} we know that $\mathrm{gin}\left(I^\lir\right)$ is always a subset of $\mathrm{gin}\left(I\right)^\lir$. Hence, the equality between the Hilbert series forces the ideals to be the same. So, the parts \eqref{partc}, \eqref{partd} and \eqref{parte} are all equivalent.\\
Now consider the short exact sequence 
\[
0\rightarrow R/I^{\langle i+1\rangle}\rightarrow R/I^{\langle i\rangle}\rightarrow \U_{i+1}(A)\rightarrow 0
\]
of graded $R$-modules. Applying the depth lemma~\cite[Proposition 1.2.9]{Bruns-Herzog} to the depth of $\U_{i+1}(A)$ gives us the implication \eqref{partb} $\implies$ \eqref{parta}.\\ To show the implication \eqref{parta} $\implies$ \eqref{partb}, we use induction on $\ell=d-i$. If $\ell=1$, then $R/I^{\langle d-1\rangle}=\U_d(A)$ which is Cohen--Macaulay, since $A$ is sequentially Cohen--Macaulay . Hence, depth of $R/I^{\langle d-1\rangle}$ is equal to $d$. Now, if we assume that $\dps\left(R/I^{\langle i+1\rangle}\right)\geq i+2$, then the depth lemma implies that $\dps\left(R/I^{\langle i\rangle}\right)\geq i+1$, since $\U_{i+1}(A)$ is Cohen--Macaulay by assumption.\\
Observe that the equivalence between parts \eqref{partb} and \eqref{partcc} follows from Lemma~\ref{dpsborel} and the fact that $\dps\left(R/I^\lir\right)=\dps\left(R/\mathrm{gin}(I^\lir)\right)$.\\
Finally, to see the equivalence between parts \eqref{partcc} and \eqref{partc}, we note that taking generic initial ideal of the both sides of $I\subseteq I^\lir$ together with Proposition~\ref{gindim} imply that 
\[
\mathrm{gin}\left(I\right)\subseteq \mathrm{gin}\left(I^\lir\right)\subseteq \mathrm{gin}\left(I\right)^\lir.
\]
Which implies that 
\[
\mathrm{gin}\left(I\right)^\lir\subseteq \mathrm{gin}\left(I^\lir\right)^\lir\subseteq \left(\mathrm{gin}\left(I\right)^\lir\right)^\lir=\mathrm{gin}\left(I\right)^\lir.
\]
\end{proof}

Now we are in position to state the main result of this section.

\begin{thm}\label{sqcm}
Let $I\subseteq R$ be a homogeneous ideal. Then $R/I$ is sequentially Cohen--Macaulay if and only if $$\BW\left(R/I;t,w\right)=\BW\left(R/\mathrm{gin}(I);t,w\right).$$
\end{thm}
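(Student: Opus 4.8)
The plan is to deduce the statement from the equivalence of conditions~\eqref{parta} and~\eqref{parte} in Proposition~\ref{char}, by showing that the Bj\"orner--Wachs polynomial of $R/I$ records exactly the same data as the collection of Hilbert series $\mathrm{Hilb}(R/I^{\langle i\rangle};t)$ of the quotients by the filter ideals. Throughout write $J=\mathrm{gin}(I)$; since $R/J$ has the same Hilbert function as $R/I$ it has the same Krull dimension $d$, so $\BW(R/I;t,w)$ and $\BW(R/J;t,w)$ are polynomials in $w$ of degree at most $d$ and may be compared coefficient by coefficient.

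First I would build the dictionary. Since each unmixed layer $\U_i(A)$ of $A=R/I$ is either $0$ or of Krull dimension exactly $i$ (the lemma preceding Definition~\ref{BW}), we have $\h(\U_i(A);t)=(1-t)^i\,\mathrm{Hilb}(\U_i(A);t)$ in every case, so $\BW(R/I;t,w)=\BW(R/J;t,w)$ if and only if $\mathrm{Hilb}(\U_i(R/I);t)=\mathrm{Hilb}(\U_i(R/J);t)$ for all $i$. Next, splicing the short exact sequence $0\to A^{\langle i-1\rangle}\to A^{\langle i\rangle}\to\U_i(A)\to 0$ with $0\to A^{\langle i\rangle}\to R/I\to R/I^{\langle i\rangle}\to 0$, where $A^{\langle i\rangle}=I^{\langle i\rangle}/I$, gives
\[
\mathrm{Hilb}(\U_i(A);t)=\mathrm{Hilb}(R/I^{\langle i-1\rangle};t)-\mathrm{Hilb}(R/I^{\langle i\rangle};t),
\]
and telescoping these identities together with $\mathrm{Hilb}(R/I;t)=\sum_{j\ge 1}\mathrm{Hilb}(\U_j(A);t)$ (as in the proof of Proposition~\ref{specialise}) yields $\mathrm{Hilb}(R/I^{\langle i\rangle};t)=\sum_{j>i}\mathrm{Hilb}(\U_j(A);t)$. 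These two formulas show that the families $\{\mathrm{Hilb}(\U_i(A);t)\}_i$ and $\{\mathrm{Hilb}(R/I^{\langle i\rangle};t)\}_i$ determine one another, and likewise for $J$; hence $\BW(R/I;t,w)=\BW(R/J;t,w)$ if and only if $\mathrm{Hilb}(R/I^{\langle i\rangle};t)=\mathrm{Hilb}(R/J^{\langle i\rangle};t)$ for all $i$. Since the cases $i=0$ (both sides equal the common $\mathrm{Hilb}(R/I;t)$) and $i\ge d$ (both sides are $0$, as $I^{\langle d\rangle}=J^{\langle d\rangle}=R$, using $\dim R/J=d$) hold automatically, this amounts to asking for the equalities only for $0\le i<d$.

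It then only remains to invoke Proposition~\ref{char}: condition~\eqref{parte} there states precisely that $\mathrm{Hilb}(R/I^{\langle i\rangle};t)=\mathrm{Hilb}(R/\mathrm{gin}(I)^{\langle i\rangle};t)$ for all $0\le i<d$, and it is equivalent to condition~\eqref{parta}, i.e.\ to $R/I$ being sequentially Cohen--Macaulay. Combining this with the dictionary above yields that $R/I$ is sequentially Cohen--Macaulay if and only if $\BW(R/I;t,w)=\BW(R/\mathrm{gin}(I);t,w)$, as desired. The real content of the argument lies in Propositions~\ref{gindim} and~\ref{char} (which supply the inclusion $\mathrm{gin}(I^{\langle i\rangle})\subseteq\mathrm{gin}(I)^{\langle i\rangle}$ and the depth-theoretic reformulation of sequential Cohen--Macaulayness); within the present proof the only delicate point is the index bookkeeping — verifying that the endpoint cases $i=0$ and $i\ge d$ are vacuous, so that condition~\eqref{parte} of Proposition~\ref{char}, stated over $0\le i<d$, really captures every coefficient $w^i$, $1\le i\le d$, of the Bj\"orner--Wachs polynomial.
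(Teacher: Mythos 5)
Your proposal is correct and follows the paper's own route: the paper proves Theorem~\ref{sqcm} as an ``immediate consequence'' of Proposition~\ref{char}, and what you have done is spell out the implicit dictionary showing that equality of Bj\"orner--Wachs polynomials is exactly condition~\eqref{parte} (equality of the Hilbert series of the $R/I^{\langle i\rangle}$ and $R/\mathrm{gin}(I)^{\langle i\rangle}$), which is equivalent to sequential Cohen--Macaulayness. The bookkeeping with the unmixed layers, the telescoping of the filtration, and the endpoint cases is accurate, so this is the same argument with the omitted details filled in.
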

\begin{proof}
This is an immediate consequence of Proposition~\ref{char}.
\end{proof}

%----------------------------------COMMENT-------------------------------
\begin{comment}
It should be remarked that in characteristic $p>0$ the ``if part'' of Theorem~\ref{sqcm} is not valid. \note{An Example} illustrates this. However, it is plausible that the ``only if part'' is still valid in any characteristic. 
\end{comment}

%--------------------------------------------------------------------------------
We will end this section with an application to the algebraic shifting theory of simplicial complexes. Algebraic shifting, invented by Kalai, is an operator that associate to every simplicial complex a shifted complex, while preserving many interesting properties such as $f$-vector and topological Betti numbers. We refer the reader to the survey article by Kalai~\cite{Kalai} or the book by Herzog \& Hibi~\cite{Herzog-Hibi} for precise definetions as well as all undefined terminologies in the sequel. 

\begin{thm}\label{Symm}
Let $\k$ be a field of characteristic zero, $\Delta$ a simplicial complex and $\Delta^{\mathrm{s}}$ its symmetric algebraic shifting. Then $\Delta$ is sequentially Cohen--Macaulay (i.e., $\k[\Delta]$ is sequentially Cohen--Macaulay) if and only if $\Delta$ and $\Delta^{\mathrm{s}}$ have the same $h$-triangles. 
\end{thm}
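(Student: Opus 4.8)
The plan is to reduce Theorem~\ref{Symm} to Theorem~\ref{sqcm} by identifying symmetric algebraic shifting with the operation $I \mapsto \mathrm{gin}(I)$ on Stanley--Reisner ideals and by using Theorem~\ref{BWDelta} to translate between $h$-triangles and Bj\"orner--Wachs polynomials. Recall that, over a field of characteristic zero, the symmetric algebraic shifted complex $\Delta^{\mathrm{s}}$ is by definition the simplicial complex whose Stanley--Reisner ideal is $\mathrm{gin}(I_\Delta)$ with respect to the reverse lexicographic order; this is precisely the setup under which $\mathrm{gin}$ behaves well (see~\cite{Herzog-Hibi}). The key input from the present paper is Theorem~\ref{BWDelta}: the doubly indexed $h$-numbers $h_{i,j}(\Delta)$ are exactly the coefficients of $w^i t^j$ in $\BW(\k[\Delta];t,w)$. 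Hence two simplicial complexes have the same $h$-triangle if and only if their face rings have the same Bj\"orner--Wachs polynomial.

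First I would make the translation explicit: by Theorem~\ref{BWDelta} applied to both $\Delta$ and $\Delta^{\mathrm{s}}$, the statement ``$\Delta$ and $\Delta^{\mathrm{s}}$ have the same $h$-triangles'' is equivalent to
\[
\BW\big(\k[\Delta];t,w\big)=\BW\big(\k[\Delta^{\mathrm{s}}];t,w\big).
\]
Next I would substitute $\k[\Delta^{\mathrm{s}}] = R/I_{\Delta^{\mathrm{s}}} = R/\mathrm{gin}(I_\Delta)$, so that the right-hand side becomes $\BW(R/\mathrm{gin}(I_\Delta);t,w)$. Then the displayed equality is exactly the condition appearing in Theorem~\ref{sqcm} with $I = I_\Delta$, which holds if and only if $R/I_\Delta = \k[\Delta]$ is sequentially Cohen--Macaulay, i.e. if and only if $\Delta$ is sequentially Cohen--Macaulay. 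This chain of equivalences is the whole proof.

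The only point that requires a little care — and what I expect to be the main (minor) obstacle — is making sure the identification $I_{\Delta^{\mathrm{s}}} = \mathrm{gin}(I_\Delta)$ is the correct and standard meaning of symmetric algebraic shifting in characteristic zero, and that the reverse lexicographic order used here matches the one used in the definition of $\Delta^{\mathrm{s}}$; one must also note that $\mathrm{gin}(I_\Delta)$ is a squarefree (indeed Borel-fixed) monomial ideal so that it genuinely is the Stanley--Reisner ideal of a simplicial complex, which is why the characteristic-zero hypothesis is needed. Once these bookkeeping points are in place, no further computation is needed: the theorem follows by simply citing Theorem~\ref{BWDelta} to pass between $h$-triangles and $\BW$-polynomials, and Theorem~\ref{sqcm} to characterise when the $\BW$-polynomial is preserved under $\mathrm{gin}$.
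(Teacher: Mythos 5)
There is a genuine gap, and it is exactly at the point you flagged as ``bookkeeping'': the identification $I_{\Delta^{\mathrm{s}}}=\mathrm{gin}(I_\Delta)$ is false. The reverse lexicographic generic initial ideal of a squarefree monomial ideal is essentially never squarefree — in the paper's own example one has $\mathrm{gin}(I)=\langle x_1^2,x_1x_2,x_2^2,\ldots\rangle$, which contains $x_1^2$ — so $\mathrm{gin}(I_\Delta)$ cannot be the Stanley--Reisner ideal of any simplicial complex. The actual definition of symmetric algebraic shifting (see \cite[Chapter 11]{Herzog-Hibi}) takes $\mathrm{gin}(I_\Delta)$ and then applies a further ``squarefree'' operator $\sigma$ (sending a monomial $x_{i_1}\cdots x_{i_k}$ with $i_1\le\cdots\le i_k$ to $x_{i_1}x_{i_2+1}\cdots x_{i_k+k-1}$) to produce $I_{\Delta^{\mathrm{s}}}=\mathrm{gin}(I_\Delta)^{\sigma}$. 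So your chain of equivalences breaks at the substitution $\k[\Delta^{\mathrm{s}}]=R/\mathrm{gin}(I_\Delta)$, and the characteristic-zero hypothesis is not there to make $\mathrm{gin}$ squarefree (it doesn't); it is there because the standard theory of symmetric shifting is set up in characteristic zero.

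The repair, which is how the paper argues, is to route through the gin of $I_{\Delta^{\mathrm{s}}}$ rather than equate the two ideals. Two facts are needed: first, $\Delta^{\mathrm{s}}$ is a shifted complex, hence sequentially Cohen--Macaulay, so Theorem~\ref{sqcm} gives $\BW(\k[\Delta^{\mathrm{s}}];t,w)=\BW(R/\mathrm{gin}(I_{\Delta^{\mathrm{s}}});t,w)$ unconditionally; second, $\mathrm{gin}(I_{\Delta^{\mathrm{s}}})=\mathrm{gin}(I_\Delta)$ (this is \cite[Proposition 11.2.9]{Herzog-Hibi}, the correct bookkeeping fact). Combining these, $\BW(\k[\Delta^{\mathrm{s}}];t,w)=\BW(R/\mathrm{gin}(I_\Delta);t,w)$ always, and then Theorem~\ref{sqcm} applied to $I_\Delta$, together with your (correct) use of Theorem~\ref{BWDelta} to translate equality of $h$-triangles into equality of Bj\"orner--Wachs polynomials, finishes the argument. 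The rest of your reduction is fine once this step is fixed.
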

\begin{proof}
We observe the simple fact that any shifted complex is sequentially Cohen--Macaulay. Hence, Theorem~\ref{sqcm} implies that $\k[\Delta^{\mathrm{s}}]$ has a stable Bj\"{o}rner--Wachs polynomial passing to the generic initial ideal. However, it can be easily deduced from~\cite[Proposition 11.2.9]{Herzog-Hibi} that the Stanley--Reisner ideals of $\Delta$ and $\Delta^{\mathrm{s}}$ have the same generic initial ideal. Therefore, we may conclude by using Theorem~\ref{sqcm} once more time. 
\end{proof}

\begin{rmk}

For a simplicial complex $\Delta$, the exterior non-face ideal $J_\Delta$ and the exterior face algebra $\k\{\Delta\}=E/J_\Delta$ can be defined analoguesly as the Stanley--Reisner ideal $I_\Delta$ and the face ring $\k[\Delta]$. The exterior algebraic shifting of $\Delta$ is then defined to be the simplicial complex $\Delta^{\mathrm{e}}$ such that $J_{\Delta^{\mathrm{e}}}=\mathrm{gin}(J_\Delta)$. See~\cite[Chapters 5 \& 11]{Herzog-Hibi} for exterior face rings and exterior algebraic shifting, or~\cite{Kalai} for a more combinatorial approach. Duval~\cite{Duval}, proved that a simplicial complex $\Delta$ is sequentially Cohen--Macaulay if and only if $\Delta$ and $\Delta^{\mathrm{e}}$ have the same $h$-triangles. It should be noted that the symmetric and exterior algebraic shifting do not necessarily coincide, even in the special case of sequentially Cohen--Macaulay complexes. As an example illustrating this one may consider the complete bipartite graph $K_{3,3}$, see e.g.~\cite[p. 128]{Kalai}. 

\end{rmk}

%=============================== SECTION: Local =================================

\section{Connections to Local Cohomology Modules}\label{Local}
In this section we show that for a sequentially Cohen--Macaulay algebra the Bj\"{o}rner--Wachs polynomial and the Hilbert series of local cohomology modules (supported on the maximal graded ideal $\mathfrak{m}$ of $R$) determine each other. We start by recalling some notions. The general references for the facts that we use here are the books by Bruns \& Herzog~\cite{Bruns-Herzog} and by Stanley~\cite{StanleyGreen}.

Let $M$ be a finitely generated $\mathbb{Z}$-graded $R$-module. The \Defn{injective hull} of $M$ is the smallest injective module containing $M$. The injective hull $\mathrm{E}_R(\k)$ of $\k\cong R/\mathfrak{m}$ as a graded $R$-module is isomorphic to $\k[x_1^{-1},\ldots,x_n^{-1}]$. The \Defn{Matlis dual} of $M$ is defined to be
\[
M^\vee:=\mathrm{Hom}_R\left(M,\mathrm{E}_R(\k)\right). 
\]

\begin{thm}\label{localCoh}
Let $A$ be a standard $\mathbb{N}$-graded sequentially Cohen--Macaulay algebra of Krull dimension $d$. Then one has \[(t-1)^i\mathrm{Hilb}\left(\mathrm{H}^i_\mathfrak{m}(A);t\right)=\mathrm{h}\left(\U_i(A);t\right)\]  for $1\leq i\leq d$. 
\end{thm}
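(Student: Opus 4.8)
The plan is to induct on $d - i$ (equivalently, work downward from the top unmixed layer), exactly as in the proof of Proposition~\ref{char}, and to track local cohomology through the short exact sequences of the dimension filtration. The key structural input is that for a sequentially Cohen--Macaulay $A$, each unmixed layer $\U_i(A) \cong A^{\langle i\rangle}/A^{\langle i-1\rangle}$ is either zero or Cohen--Macaulay of dimension $i$, so $\mathrm{H}^j_\mathfrak{m}(\U_i(A)) = 0$ unless $j = i$, in which case $\dim_\k \mathrm{H}^i_\mathfrak{m}(\U_i(A))_m < \infty$ for all $m$ and the Hilbert series $\mathrm{Hilb}(\mathrm{H}^i_\mathfrak{m}(\U_i(A));t)$ is a well-defined formal Laurent series.

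First I would recall the graded local cohomology version of the Serre/Grothendieck formula: for any finitely generated graded $R$-module $M$ of dimension $e$ with $h$-polynomial decomposition of its Hilbert series, one has the additivity of local cohomology along short exact sequences. Applying $\mathrm{H}^\bullet_\mathfrak{m}(-)$ to
\[
0 \longrightarrow A^{\langle i-1\rangle} \longrightarrow A^{\langle i\rangle} \longrightarrow \U_i(A) \longrightarrow 0
\]
gives a long exact sequence. Since $\dim A^{\langle i-1\rangle} \le i-1$, we have $\mathrm{H}^j_\mathfrak{m}(A^{\langle i-1\rangle}) = 0$ for $j \ge i$; combined with $\mathrm{H}^j_\mathfrak{m}(\U_i(A)) = 0$ for $j \ne i$, the long exact sequence splits into $\mathrm{H}^i_\mathfrak{m}(A^{\langle i\rangle}) \cong \mathrm{H}^i_\mathfrak{m}(\U_i(A))$ and $\mathrm{H}^j_\mathfrak{m}(A^{\langle i\rangle}) \cong \mathrm{H}^j_\mathfrak{m}(A^{\langle i-1\rangle})$ for $j < i$ (up to the low-degree tail, which one checks vanishes because $A^{\langle i-1\rangle}$ has no submodule of dimension $\ge i$). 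Iterating, $\mathrm{H}^i_\mathfrak{m}(A) = \mathrm{H}^i_\mathfrak{m}(A^{\langle d\rangle}) \cong \mathrm{H}^i_\mathfrak{m}(\U_i(A))$ for each $i$ with $1 \le i \le d$.

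It then remains to convert $\mathrm{Hilb}(\mathrm{H}^i_\mathfrak{m}(\U_i(A));t)$ into $\mathrm{h}(\U_i(A);t)$. For a Cohen--Macaulay graded module $N$ of dimension $i$, the only nonvanishing local cohomology is $\mathrm{H}^i_\mathfrak{m}(N)$, and the Serre formula $\sum_j (-1)^j \mathrm{Hilb}(\mathrm{H}^j_\mathfrak{m}(N);t) = (-1)^i \mathrm{Hilb}(N;t^{-1})$ (as rational functions) reduces to $\mathrm{Hilb}(\mathrm{H}^i_\mathfrak{m}(N);t) = (-1)^i \mathrm{Hilb}(N;t^{-1})$; writing $\mathrm{Hilb}(N;t) = \mathrm{h}(N;t)/(1-t)^i$ and substituting $t \mapsto t^{-1}$ yields $(-1)^i \mathrm{Hilb}(N;t^{-1}) = \mathrm{h}(N;t^{-1}) \big/ (1 - t^{-1})^i \cdot (-1)^i = \mathrm{h}(N;t^{-1}) t^i/(t-1)^i \cdot (\text{sign bookkeeping})$, which after clearing denominators gives exactly $(t-1)^i \mathrm{Hilb}(\mathrm{H}^i_\mathfrak{m}(N);t) = t^i\,\mathrm{h}(N;1/t)$; applying this with $N = \U_i(A)$ and matching against the stated normalization produces $(t-1)^i\mathrm{Hilb}(\mathrm{H}^i_\mathfrak{m}(A);t) = \mathrm{h}(\U_i(A);t)$.

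The main obstacle I anticipate is bookkeeping of signs and of the precise form of the graded Serre/Grothendieck duality statement — in particular, deciding whether the clean identity is $(t-1)^i \mathrm{Hilb}(\mathrm{H}^i_\mathfrak{m}(\U_i);t) = \mathrm{h}(\U_i;t)$ on the nose or up to a reflection $t \leftrightarrow 1/t$ and a factor of $t^{\dim}$, which depends on how the $h$-polynomial and the Matlis dual grading conventions are set up in the paper. I would pin this down by first proving the identity carefully for $A$ itself Cohen--Macaulay of dimension $d$ (where $\U_d(A) = A$ and $\mathrm{H}^j_\mathfrak{m}(A) = 0$ for $j < d$), and then the general sequentially Cohen--Macaulay case follows formally from the isomorphisms $\mathrm{H}^i_\mathfrak{m}(A) \cong \mathrm{H}^i_\mathfrak{m}(\U_i(A))$ established above, since each $\U_i(A)$ is Cohen--Macaulay of dimension $i$. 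A secondary point to verify is that $\U_i(A)$ being a module, not an algebra, causes no trouble: the $h$-polynomial $\mathrm{h}(M;t)$ of any finitely generated graded module $M$ of dimension $i$ is well-defined via $\mathrm{Hilb}(M;t) = \mathrm{h}(M;t)/(1-t)^i$, and Serre duality holds at the level of modules, so the argument goes through verbatim.
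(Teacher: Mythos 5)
Your first step is correct and is in fact a different, self-contained route to the same reduction the paper makes: the paper obtains $\mathrm{H}^i_\mathfrak{m}(A)\cong \mathrm{H}^i_\mathfrak{m}(\U_i(A))$ from graded local duality together with a cited result of Herzog--Sbarra, whereas you get it directly from the long exact sequences of local cohomology attached to $0\to A^{\langle k-1\rangle}\to A^{\langle k\rangle}\to \U_k(A)\to 0$, using $\dim A^{\langle k-1\rangle}\le k-1$ and the Cohen--Macaulayness of the layers; iterating over $k>i$ and then taking $k=i$ gives exactly the isomorphism needed (your remarks about the case $j<i$ and the ``low-degree tail'' are not even required for this).

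The second step, however, does not close as written. The duality formula you quote is off: for a Cohen--Macaulay graded module $N$ of dimension $i$ one has $\mathrm{Hilb}(\mathrm{H}^i_\mathfrak{m}(N);t)=\mathrm{Hilb}(\omega_N;t^{-1})$ (Matlis duality reverses the grading) and $\mathrm{Hilb}(\omega_N;t)=(-1)^i\,\mathrm{Hilb}(N;t^{-1})$, so that, read as expansions at $t=\infty$, $\mathrm{Hilb}(\mathrm{H}^i_\mathfrak{m}(N);t)=(-1)^i\,\mathrm{Hilb}(N;t)$ and hence $(t-1)^i\,\mathrm{Hilb}(\mathrm{H}^i_\mathfrak{m}(N);t)=\mathrm{h}(N;t)$ on the nose. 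Your derivation applies the substitution $t\mapsto t^{-1}$ once too often and lands on $(t-1)^i\,\mathrm{Hilb}(\mathrm{H}^i_\mathfrak{m}(N);t)=t^i\,\mathrm{h}(N;1/t)$, i.e.\ the reversed $h$-polynomial, which is not the statement of the theorem: for $N=\k[x]\oplus \k[x](-1)^{\oplus 2}$ the left side equals $1+2t$ while $t\,\mathrm{h}(N;1/t)=t+2$. The phrase ``matching against the stated normalization'' is precisely the point that still has to be proved; you flag it yourself, and your plan of first checking the Cohen--Macaulay case would expose the discrepancy, but as submitted the key identity is unestablished, and one must also take care that all manipulations are with Laurent series bounded above in degree (expansions at infinity), not with rational functions. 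The paper sidesteps both issues by a more elementary device: choose an $N$-regular sequence $\Theta$ of $i$ linear forms, use the exact sequence $0\to \mathrm{H}^{i-1}_\mathfrak{m}(N/yN)\to \mathrm{H}^i_\mathfrak{m}(N)(-1)\to \mathrm{H}^i_\mathfrak{m}(N)\to 0$ to peel off one factor $(t-1)$ at a time, and end with $\mathrm{H}^0_\mathfrak{m}(N/\Theta N)=N/\Theta N$, whose Hilbert series is $\mathrm{h}(N;t)$. Adopting either the corrected duality bookkeeping or this regular-sequence reduction completes your proof.
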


\begin{proof}
First observe that, it follows from Grothendieck local duality~\cite[Theorem I. 12.3]{StanleyGreen} and~\cite[Proposition 1.3]{Herzog-Sbarra} that 
\[
\mathrm{H}^i_\mathfrak{m}(A)\cong\mathrm{Ext}^{n-i}_R\left(A,R(-n)\right)^\vee\cong\mathrm{Ext}^{n-i}_R\left(\U_i(A),R(-n)\right)^\vee\cong\mathrm{H}^i_\mathfrak{m}(\U_i(A))
\]
where $R(-n)$ is the canonical module of $R$ (the free module with one generator of degree $n$). So, we may assume that $A=\U_i(A)$ is Cohen--Macaulay of Krull dimension $i=d$. Now, if $y\in R$ is an $A$-regular degree-one form, then the sequence 
\[
0\rightarrow \mathrm{H}^{i-1}_{\mathfrak{m}}(A/yA)\rightarrow \mathrm{H}^{i}_{\mathfrak{m}}(A)(-1)\rightarrow \mathrm{H}^{i}_{\mathfrak{m}}(A)\rightarrow 0
\]
is exact (see, e.g., \cite[p. 176]{Bruns-Herzog}). So, one has 
\begin{equation}\label{eq8}
\mathrm{Hilb}(\mathrm{H}^{i-1}_{\mathfrak{m}}(A/yA);t)=(t-1)\cdot\mathrm{Hilb}(\mathrm{H}^{i}_{\mathfrak{m}}(A);t). 
\end{equation}
Thus, if $\Theta=(y_1,\ldots,y_i)$ is an $A$-regular sequence of $1$-forms, then, by repeating equation~\eqref{eq8} say, one obtains
\[
\mathrm{Hilb}(\mathrm{H}^{0}_{\mathfrak{m}}(A/\Theta A);t)=(t-1)^i\cdot\mathrm{Hilb}(\mathrm{H}^{i}_{\mathfrak{m}}(A);t).
\]
However, $A/\Theta A$ being zero-dimensional, we have $\mathrm{H}^{0}_{\mathfrak{m}}(A/\Theta A)=A/\Theta A$. On the other hand, $\mathrm{Hilb}(A/\Theta A;t)=\h(A;t)$ and we are done. 

\end{proof}
\begin{rmk}
It was shown by Herzog \& Sbarra~\cite{Herzog-Sbarra} that a finitely generated $\mathbb{Z}$-graded $R$-module is sequentially Cohen--Macaulay if and only if passing to the reverse lexicographic generic initial module preserves the Hilbert series of all local cohomology modules supported on $\mathfrak{m}$. Their result together with Theorem~\ref{localCoh} would imply one direction of our Theorem~\ref{sqcm}. 
\end{rmk}

 Theorem~\ref{localCoh}, in particular, says that in the sequentially Cohen--Macaulay case the Bj\"{o}rner--Wachs polynomial and the Hilbert series of local cohomology modules contain the same information. So, it wouldn't be surprising if we can detect the depth and the Castelnuovo--Mumford regularity from the information contained in the Bj\"{o}rner--Wachs polynomial. And, in fact, we have the following generalisation of the well-known result that the regularity of a Cohen--Macaulay module is equal to the degree of its $h$-polynomial.

\begin{cor}
Let $A$ be a standard $\mathbb{N}$-graded sequentially Cohen--Macaulay algebra. Then the extremal Betti numbers of $A$ can be read from its Bj\"{o}rner--Wachs polynomial. In particular,
\begin{enumerate}[\rm (a)]
\item the Castelnuovo--Mumford regularity of $A$ is equal to the largest integer $j$ such that $t^j$ appears with a non-zero coefficient in $\BW(A; t, w)$, and
\item the depth of $A$ is equal to the smallest integer $i$ such that $w^i$ appears with a non-zero coefficient in $\BW(A; t, w)$.
\end{enumerate}
\end{cor}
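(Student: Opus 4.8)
The plan is to deduce this Corollary from Theorem~\ref{localCoh} together with the standard correspondence between local cohomology and extremal Betti numbers via Grothendieck local duality. First I would recall that for a finitely generated graded $R$-module the extremal Betti numbers are detected by the (graded) local cohomology: specifically, by Bass numbers of the canonical module, i.e.\ by the pairs $(i,j)$ for which $\mathrm{H}^i_{\mathfrak{m}}(A)_{-j}\neq 0$ with a suitable "corner" (maximality) condition. Since $A$ is sequentially Cohen--Macaulay, Theorem~\ref{localCoh} gives $\mathrm{Hilb}(\mathrm{H}^i_{\mathfrak{m}}(A);t)=(t-1)^{-i}\,\mathrm{h}(\U_i(A);t)$, so the nonvanishing of $\mathrm{H}^i_{\mathfrak{m}}(A)$ and the top graded degree in which it is nonzero are read directly off the polynomial $\mathrm{h}(\U_i(A);t)$, which is exactly the coefficient of $w^i$ in $\BW(A;t,w)$. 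Thus every piece of data needed to locate the extremal Betti numbers — which cohomological degrees $i$ are relevant, and in which internal degree each $\mathrm{H}^i_{\mathfrak{m}}(A)$ attains its top nonzero graded component — is encoded in $\BW(A;t,w)$.

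For part (a): the Castelnuovo--Mumford regularity of $A$ is $\max_i\{\, j : \mathrm{H}^i_{\mathfrak{m}}(A)_{j-i}\neq 0\,\}$. Using Theorem~\ref{localCoh}, $\mathrm{H}^i_{\mathfrak{m}}(A)_{k}\neq 0$ for the largest $k$ precisely when the corresponding shift matches the degree of $\mathrm{h}(\U_i(A);t)$ minus $i$ (the factor $(t-1)^{-i}=\sum_{k\ge 0}\binom{i-1+k}{k}t^k$ contributes only nonnegative degrees, so the top degree of $\mathrm{Hilb}(\mathrm{H}^i_{\mathfrak{m}}(A);t)$ equals $\deg \mathrm{h}(\U_i(A);t)-i$). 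Therefore $\operatorname{reg}(A)=\max_i\deg\mathrm{h}(\U_i(A);t)$, which is exactly the largest $j$ with $t^j$ appearing in $\BW(A;t,w)$. For part (b): $\dps(A)$ is the smallest $i$ with $\mathrm{H}^i_{\mathfrak{m}}(A)\neq 0$; but when $\U_i(A)\neq 0$ it is Cohen--Macaulay of dimension $i$, so $\mathrm{H}^i_{\mathfrak{m}}(\U_i(A))\neq 0$ and by the isomorphism $\mathrm{H}^i_{\mathfrak{m}}(A)\cong\mathrm{H}^i_{\mathfrak{m}}(\U_i(A))$ in the proof of Theorem~\ref{localCoh} we get $\mathrm{H}^i_{\mathfrak{m}}(A)\neq 0$ iff $\U_i(A)\neq 0$ iff $w^i$ appears in $\BW(A;t,w)$; hence $\dps(A)$ is the smallest such $i$.

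The main obstacle will be making the extremal Betti number statement precise and rigorous: I need to pin down exactly which graded components of $\mathrm{H}^i_{\mathfrak{m}}(A)$ correspond to extremal Betti numbers (the "northeast corner" condition on the Betti table), and verify that for a sequentially Cohen--Macaulay algebra these corners are in bijection with the pairs $(i,\deg\mathrm{h}(\U_i(A);t))$ for which $\U_i(A)\neq 0$ and this degree is strictly larger than $\deg\mathrm{h}(\U_{i'}(A);t)$ for all $i'<i$ with $\U_{i'}(A)\neq 0$. This is where I would invoke local duality carefully: the extremal Betti number $\beta_{n-i,\,n-i+a}$ of $A$ equals $\dim_{\k}\mathrm{H}^i_{\mathfrak{m}}(A)_{-a}$ at a corner, and by Theorem~\ref{localCoh} this dimension is the coefficient read off $\mathrm{h}(\U_i(A);t)$ after dividing by $(t-1)^i$. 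Parts (a) and (b) are then the easy special cases (largest $t$-degree and smallest $w$-degree), and require only the degree bookkeeping sketched above plus the observation that the $(t-1)^{-i}$ factor shifts nothing below degree $0$ and raises the top degree by exactly $i$.
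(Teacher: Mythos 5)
Your route is the one the paper intends: the corollary is stated with no written proof, as a direct consequence of Theorem~\ref{localCoh} together with the standard local-cohomology characterisations of regularity, depth and extremal Betti numbers, and that is exactly what you do. Two points need repair or completion, though. First, your degree bookkeeping in part (a) is justified incorrectly as written: you expand $(t-1)^{-i}$ as $\sum_{k\ge 0}\binom{i-1+k}{k}t^{k}$ and say it ``contributes only nonnegative degrees'', but that series is $(1-t)^{-i}$ and, more importantly, it is the wrong expansion here --- if the factor contributed only nonnegative degrees, multiplying by a polynomial of degree $s$ could not produce top degree $s-i$. Since $\mathrm{H}^i_{\mathfrak m}(A)$ is Artinian, its Hilbert series is supported in degrees bounded above, so one must expand at infinity,
\[
(t-1)^{-i}=t^{-i}\bigl(1-t^{-1}\bigr)^{-i}=\sum_{k\ge 0}\binom{i-1+k}{k}t^{-i-k},
\]
whose top term is $t^{-i}$; the coefficient of $t^{\deg \mathrm{h}(\U_i(A);t)-i}$ in the product is then the leading coefficient of $\mathrm{h}(\U_i(A);t)$, which is nonzero, so indeed $a_i(A)=\deg \mathrm{h}(\U_i(A);t)-i$ and $\mathrm{reg}(A)=\max_i\deg \mathrm{h}(\U_i(A);t)$. (Alternatively, avoid series altogether: cut $\U_i(A)$ by an $i$-element linear regular sequence as in the proof of Theorem~\ref{localCoh}.) Second, the general assertion about extremal Betti numbers, which you flag as the main obstacle, is closed by the theorem of Bayer, Charalambous and Popescu identifying extremal Betti numbers with the ``corners'' of local cohomology, namely $\beta_{n-i,\,n-i+j}(A)=\dim_\k \mathrm{H}^i_{\mathfrak m}(A)_{j-i}$ at an extremal position; combined with Theorem~\ref{localCoh} this makes the corner data, and hence the extremal Betti numbers, readable from $\BW(A;t,w)$. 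With these two fixes your argument for (a) and (b), including the depth argument via the nonvanishing of the top local cohomology of the Cohen--Macaulay layers $\U_i(A)$ (and the direct identification $\mathrm{H}^0_{\mathfrak m}(A)=\U_0(A)$ for $i=0$, which lies outside the stated range of Theorem~\ref{localCoh}), is complete and agrees with the intended proof.
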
\qed

%=============================== SECTION: Remarks =================================

\section{Comments on Combinatorial Alexander Duality}\label{Remarks}

We close this paper by two remarks on the Alexander dual of sequentially Cohen--Macaulay simplicial complexes.\\
 A special case of our Theorem~\ref{localCoh}, due to Herzog, Reiner \& Welker~\cite[Proposition 12]{HRW}, has been proved to be a very useful tool. For instance, it was recently used in~\cite{KAA} in order to derive a characterisation of the Betti table of componentwise linear ideals from a characterisation of the $h$-triangle of sequentially Cohen-Macaulay simplicial complexes. We shall take a closer look at this special case. Let $\Delta$ be a simplicial complex. In this situation, Hochster's Theorem~\cite[Theorem II. 4.1]{StanleyGreen} asserts that
\begin{equation*}
\mathrm{Hilb}\left(\mathrm{H}^i_\mathfrak{m}(\k[\Delta]);t\right)=\sum_{\substack{F\in\Delta\\ |F|=c}}\dim_\k \wt{H}_{i-c-1}(\mathrm{link}_\Delta F;\k)\frac{t^{-c}}{(1-t^{-1})^{c}}. 
\end{equation*}
Hence, if we denote by $\Delta^\ast$ the combinatorial Alexander dual of $\Delta$, then it follows from the dual version of Hochster's formula~\cite[Proposition 1]{Eagon-Reiner} that 
\begin{equation*}\label{localBetti}
\mathrm{Hilb}\left(\mathrm{H}^i_\mathfrak{m}(\k[\Delta]);t\right)=\sum_{c}\beta_{i-c+1,n-c}(\k[\Delta^\ast])\frac{t^{-c}}{(1-t^{-1})^{c}}. 
\end{equation*}
It follows then easily by combining Theorems~\ref{BWDelta} and~\ref{localCoh} and the formula above (together with a routine computation left to the reader) that 
\begin{cor}[{{\cite[Proposition 12]{HRW}}}]
Let $\Delta$ be a sequentially Cohen--Macaulay simplicial complex on $[n]$. Then, one has
\begin{equation}\label{eq:HRW}
\sum_c\beta_{i-c+1,n-c}(\k[\Delta^\ast])(t-1)^{i-c}=\sum_\ell h_{i,\ell}(\Delta)t^\ell,
\end{equation}
for all $0\leq i\leq d=\dim\k[\Delta]$ . 
\end{cor}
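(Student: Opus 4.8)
The plan is to combine the three ingredients that the corollary's statement advertises: Theorem~\ref{BWDelta}, which identifies $\sum_\ell h_{i,\ell}(\Delta)t^\ell$ as the coefficient of $w^i$ in $\BW(\k[\Delta];t,w)$, i.e. as $\h(\U_i(\k[\Delta]);t)$; Theorem~\ref{localCoh}, which for a sequentially Cohen--Macaulay algebra gives $(t-1)^i\,\mathrm{Hilb}(\mathrm{H}^i_\mathfrak{m}(\k[\Delta]);t)=\h(\U_i(\k[\Delta]);t)$; and the displayed dual-Hochster formula
\[
\mathrm{Hilb}\left(\mathrm{H}^i_\mathfrak{m}(\k[\Delta]);t\right)=\sum_{c}\beta_{i-c+1,n-c}(\k[\Delta^\ast])\frac{t^{-c}}{(1-t^{-1})^{c}}.
\]
Chaining these gives
\[
\sum_\ell h_{i,\ell}(\Delta)t^\ell=(t-1)^i\sum_c\beta_{i-c+1,n-c}(\k[\Delta^\ast])\frac{t^{-c}}{(1-t^{-1})^{c}},
\]
so the whole content of the proof is to simplify the right-hand side to $\sum_c\beta_{i-c+1,n-c}(\k[\Delta^\ast])(t-1)^{i-c}$.

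The simplification is the "routine computation left to the reader": write $\dfrac{t^{-c}}{(1-t^{-1})^{c}}=\dfrac{t^{-c}}{t^{-c}(t-1)^{c}}=\dfrac{1}{(t-1)^{c}}$, using $1-t^{-1}=(t-1)/t$. Then the factor $(t-1)^i$ in front combines with $1/(t-1)^c$ to give $(t-1)^{i-c}$ in each summand, yielding exactly the stated identity~\eqref{eq:HRW}. I would present this as a two-line manipulation rather than belabour it.

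The one point that deserves a word of care is the range of validity and well-definedness: Theorem~\ref{localCoh} is stated for $1\leq i\leq d$, whereas the corollary claims~\eqref{eq:HRW} for $0\leq i\leq d$. The $i=0$ case must be handled separately (either noting that $\mathrm{H}^0_\mathfrak{m}(\k[\Delta])=0$ for a complex of positive dimension, or observing that both sides reduce to the statement $h_{0,0}(\Delta)=\beta_{1,n}(\k[\Delta^\ast])$, which is the $i=0$ instance and follows directly from $\h(\U_0;t)=0$ together with the dual Hochster formula). I should also remark that the negative powers of $t$ appearing in the intermediate expression are spurious: the left-hand side is an honest polynomial in $t$, so after clearing the $(1-t^{-1})^c$ denominators everything is a polynomial identity in $\Z[t]$; one could alternatively phrase the argument entirely over the field of fractions $\Q(t)$ and note at the end that both sides lie in $\Z[t]$.

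I do not anticipate a genuine obstacle here — the corollary is explicitly flagged as following "easily by combining Theorems~\ref{BWDelta} and~\ref{localCoh} and the formula above". The mild subtleties are (i) getting the bookkeeping of the Betti-number indices right (the shift $n-c$ in the second subscript and $i-c+1$ in the first come verbatim from the dual Hochster formula and must be carried through unchanged, since the algebraic manipulation only touches the powers of $t$), and (ii) the boundary case $i=0$ noted above. If anything is "hard" it is merely the temptation to reprove the dual Hochster formula or Theorem~\ref{localCoh}; I would resist that and simply cite them.
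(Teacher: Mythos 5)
Your proposal is correct and follows exactly the route the paper intends: combine Theorem~\ref{BWDelta}, Theorem~\ref{localCoh} and the dual Hochster formula, and carry out the ``routine computation'' $t^{-c}/(1-t^{-1})^{c}=(t-1)^{-c}$ so that the prefactor $(t-1)^{i}$ yields $(t-1)^{i-c}$ in each summand. Your explicit handling of the boundary case $i=0$ (not covered by the stated range of Theorem~\ref{localCoh}) is a small point the paper glosses over, and it is dealt with correctly.
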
\qed

\noindent It can be easily shown that exterior algebraic shifting and Alexander duality commute, see e.g.~\cite[Lemma 1.1]{Herzog-Terai}. On the other hand, it is a challenging open problem to show that symmetric algebraic shifting and Alexander duality also commute. In practice, however, it is sometimes enough to know that some invariants of $\k[(\Delta^\mathrm{s})^\ast]$ and $\k[(\Delta^\ast)^\mathrm{s}]$ coincide. For instance, the proof of Theorem~4.1 in~\cite{Herzog-Sbarra} relies (implicitly) on the property that: 

\begin{prop}
For a sequentially Cohen--Macaulay complex $\Delta$ and for all $i$ and $j$ one has \(\beta_{i,j}(\k[(\Delta^\mathrm{s})^\ast])=\beta_{i,j}(\k[(\Delta^\ast)^\mathrm{s}]).\)
\end{prop}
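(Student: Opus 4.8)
The plan is to show that both Betti tables coincide with that of $\k[\Delta^\ast]$, the two equalities being established by different means. For $\beta_{i,j}(\k[(\Delta^{\mathrm{s}})^\ast])=\beta_{i,j}(\k[\Delta^\ast])$ I would use the formula~\eqref{eq:HRW} together with Theorem~\ref{Symm}. Since $\Delta^{\mathrm{s}}$ is shifted it is sequentially Cohen--Macaulay, and, $\mathrm{char}\,\k=0$ being in force (as required for symmetric shifting and Theorem~\ref{Symm}), Theorem~\ref{Symm} gives $h_{i,\ell}(\Delta)=h_{i,\ell}(\Delta^{\mathrm{s}})$ for all $i,\ell$. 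Both $\Delta$ and $\Delta^{\mathrm{s}}$ are complexes on $[n]$ of the same dimension $d$, so~\eqref{eq:HRW} applies to each of them, and since its right-hand sides agree we obtain, for every $0\le i\le d$, the polynomial identity
\[
\sum_c\beta_{i-c+1,\,n-c}(\k[\Delta^\ast])(t-1)^{i-c}=\sum_c\beta_{i-c+1,\,n-c}(\k[(\Delta^{\mathrm{s}})^\ast])(t-1)^{i-c}.
\]
Because the polynomials $(t-1)^k$, $k\ge 0$, are linearly independent, comparing the coefficient of each $(t-1)^k$ and letting $i$ range over $0\le i\le d$ yields $\beta_{i,j}(\k[\Delta^\ast])=\beta_{i,j}(\k[(\Delta^{\mathrm{s}})^\ast])$ for all $i,j$ (the pairs not reached by the comparison being ones where both sides vanish).

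For $\beta_{i,j}(\k[\Delta^\ast])=\beta_{i,j}(\k[(\Delta^\ast)^{\mathrm{s}}])$ I would instead pass to generic initial ideals. As $\Delta$ is sequentially Cohen--Macaulay, the Stanley--Reisner ideal $I_{\Delta^\ast}$ is componentwise linear \cite{HRW}; and since $(\Delta^\ast)^{\mathrm{s}}$ is shifted, $I_{(\Delta^\ast)^{\mathrm{s}}}$ is squarefree strongly stable, hence componentwise linear as well. By \cite[Theorem 8.2.22]{Herzog-Hibi} the graded Betti numbers of a componentwise linear ideal are unchanged by passing to its reverse lexicographic generic initial ideal, so
\[
\beta_{i,j}(\k[\Delta^\ast])=\beta_{i,j}\bigl(R/\mathrm{gin}(I_{\Delta^\ast})\bigr)\quad\text{and}\quad\beta_{i,j}(\k[(\Delta^\ast)^{\mathrm{s}}])=\beta_{i,j}\bigl(R/\mathrm{gin}(I_{(\Delta^\ast)^{\mathrm{s}}})\bigr).
\]
Applying to $\Delta^\ast$ the fact, used already in the proof of Theorem~\ref{Symm} (and deduced there from \cite[Proposition 11.2.9]{Herzog-Hibi}), that a complex and its symmetric algebraic shift have the same generic initial ideal, we get $\mathrm{gin}(I_{\Delta^\ast})=\mathrm{gin}(I_{(\Delta^\ast)^{\mathrm{s}}})$, so the two right-hand sides coincide. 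Chaining the two identities proves the proposition.

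The one point that is not purely formal is the coefficient-comparison step in the first part: one must check that, as $i$ runs over $0\le i\le d$, the identity above reaches every bigraded Betti number of $\k[\Delta^\ast]$ and of $\k[(\Delta^{\mathrm{s}})^\ast]$, the remaining ones vanishing. This reduces to the elementary fact that the minimal generators of $I_{\Delta^\ast}$ correspond via complementation to the facets of $\Delta$, hence have degrees between $n-d$ and $n-1$ (and likewise for $(\Delta^{\mathrm{s}})^\ast$), which pins down the diagonals $j-i$ carrying non-zero Betti numbers and makes the index bookkeeping go through. I would record this but not belabour it; the rest is an assembly of the quoted results.
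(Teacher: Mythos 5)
Your proof is correct, and its first half is exactly the derivation the paper gestures at (the paper offers no written proof beyond the remark that the statement ``can be derived easily from Theorem~\ref{Symm} and equation~\eqref{eq:HRW}''): apply~\eqref{eq:HRW} to the two sequentially Cohen--Macaulay complexes $\Delta$ and $\Delta^{\mathrm{s}}$, use Theorem~\ref{Symm} to identify the right-hand sides, and extract the Betti numbers from the linear independence of the $(t-1)^k$. Your treatment of the index bookkeeping (which diagonals $j-i$ can carry non-zero Betti numbers, via the degrees $n-|F|$ of the generators $x_{[n]\setminus F}$ of $I_{\Delta^\ast}$) is the right thing to check and is handled at an appropriate level of detail. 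Where you genuinely go beyond the paper's one-line justification is the second equality, $\beta_{i,j}(\k[\Delta^\ast])=\beta_{i,j}(\k[(\Delta^\ast)^{\mathrm{s}}])$: this does not follow from Theorem~\ref{Symm} and~\eqref{eq:HRW} alone (the only way to feed $(\Delta^\ast)^{\mathrm{s}}$ into~\eqref{eq:HRW} would be through $((\Delta^\ast)^{\mathrm{s}})^\ast$, whose relation to $\Delta^{\mathrm{s}}$ is precisely the open commutation problem the paper mentions). Your route --- componentwise linearity of $I_{\Delta^\ast}$ and of the squarefree strongly stable ideal $I_{(\Delta^\ast)^{\mathrm{s}}}$, stability of Betti tables of componentwise linear ideals under $\mathrm{gin}$, and $\mathrm{gin}(I_{\Delta^\ast})=\mathrm{gin}(I_{(\Delta^\ast)^{\mathrm{s}}})$ --- is a legitimate and necessary supplement, using only facts the paper itself invokes elsewhere. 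In short: same approach as the paper where the paper has one, plus a correct completion of the step the paper leaves implicit.
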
\qed

\noindent While we are not aware of any reference showing this property, it can be derived easily from our Theorem~\ref{Symm} and equation~\eqref{eq:HRW}. Also, note that the proof of equation~\eqref{eq:HRW} in~\cite{HRW} is done without any reference to the Hilbert series of local cohomology modules. So, the above-mentioned property can be used to give a direct simple proof that a complex is sequentially Cohen--Macaulay if and only if the Hilbert series of its local cohomology modules are stable under symmetric algebraic shifting, as in~\cite[Theorem 4.1]{Herzog-Sbarra}.

%%%%%%%%%%%%%%%%%%%%%%%%%%%%%%
%\begin{proof}

%\end{proof}
%\note{rewrite it more generally for extremal Betti numbers~\cite{Extremal}}

%%%%%%%%%%%%%%%%%%%%%%%%%%%%%%

\paragraph{Acknowledgement.} I would like to thank Mats Boij, Aldo Conca, Alex Engstr\"{o}m, Ralf Fr\"{o}berg, Enrico Sbarra and  Volkmar Welker for some many helpful conversations. In particular, Theorem~\ref{localCoh} is inspired from a conversation that I had with Aldo Conca.

%%%%%%%%%%%%%%%%%%%%%%%%%%%%%%%%%%%%%%%%%%%%%%%%%
{\small

\def\cprime{$'$}
\providecommand{\bysame}{\leavevmode\hbox to3em{\hrulefill}\thinspace}
\providecommand{\MR}{\relax\ifhmode\unskip\space\fi MR }
% \MRhref is called by the amsart/book/proc definition of \MR.
\providecommand{\MRhref}[2]{%
  \href{http://www.ams.org/mathscinet-getitem?mr=#1}{#2}
}
\providecommand{\href}[2]{#2}

}

\end{document}